\documentclass[11pt]{article}
\usepackage[a4paper,margin=1in]{geometry}
\usepackage[T1]{fontenc}\usepackage[utf8]{inputenc}
\usepackage{lmodern,amsmath,amssymb,amsthm,mathtools,bm,booktabs,array,microtype,enumitem}
\usepackage[hidelinks]{hyperref}\usepackage[nameinlink,capitalize]{cleveref}
\newtheorem{theorem}{Theorem}[section]\newtheorem{proposition}[theorem]{Proposition}
\newtheorem{lemma}[theorem]{Lemma}
\theoremstyle{definition}
\theoremstyle{remark}\newtheorem{remark}[theorem]{Remark}
\newcommand{\dd}{\,d}
\title{\textbf{Source-Driven Transitions in Axisymmetric Euler Flow}}
\author{Rishad Shahmurov\\\small Cellular Products Research and Development, Roswell, GA, USA}
\date{September 2026}
\begin{document}\maketitle
\begin{abstract}
We study a finite-time source-driven transition in axisymmetric Euler flow with swirl.  For an explicit smooth compact datum, we prove exact source, moment-return, fixed-label transport, and response identities.  For the same frozen finite reference, interval arithmetic gives continuum derivative bounds and a terminal scalar-trace enclosure.  Under the remaining global comparison hypotheses, these estimates imply contraction of two fixed material labels and a finite-radius outgoing level set.  We state separately the unresolved steps: global defect enclosure, sufficient next-block admission, cylindrical whole-space realization, and repeated regeneration from one datum.  No whole-space Euler singularity is claimed.
\end{abstract}

\section{Introduction}
The three-dimensional incompressible Euler equations are locally well posed for smooth data, and the Beale--Kato--Majda criterion reduces finite-time breakdown to the growth of vorticity \cite{BKM1984}.  In axisymmetric flow with swirl, Luo and Hou identified a strongly concentrated boundary scenario numerically \cite{LuoHouPNAS2014,LuoHouMMS2014}, and a related one-dimensional model was analyzed rigorously in \cite{ChoiHouKiselevLuoSverakYao2017}.  Finite-time singularity formation is known for lower-regularity whole-space Euler solutions \cite{Elgindi2021}; smooth-data blowup has also been proved in a bounded domain by computer-assisted analysis \cite{ChenHou2025}.  These results do not settle the smooth finite-energy whole-space problem.

The mechanism studied here is different.  We work with an explicit bounded off-axis source-active cell.  In the transported variables
\[
\Gamma=ru^\theta,\qquad G=\frac{\omega^\theta}{r},
\]
the exact equations are
\[
D_t\Gamma=0,\qquad D_tG=r^{-4}\partial_z(\Gamma^2).
\]
Thus the square of the transported circulation acts as a source for the azimuthal-vorticity variable.  Our aim is to determine, for one fixed smooth datum, which parts of a proposed transition can be proved exactly, which parts can be enclosed by rigorous computation, and which additional information is required before the outgoing state can launch another block.

The paper is organized accordingly.  The analytic sections establish an explicit source-transition datum, a positive initial response, an exact three-moment return formula, fixed-label material equations, adjoint response identities, and exact kernel formulas.  The validation sections then separate statements about the finite reference from statements about the exact Euler solution.  This distinction is essential: a certified reference calculation is not by itself an admission theorem for the physical flow.  The present results establish a finite-event framework, not an iterative singularity construction.

\paragraph{Relation to earlier work.}
The Luo--Hou and Chen--Hou constructions use boundary geometry as a load-bearing part of the amplification mechanism \cite{LuoHouPNAS2014,LuoHouMMS2014,ChenHou2025}.  The present paper instead isolates an interior off-axis source-active transition and keeps the whole-space cylindrical recovery as a separate step.  Compared with lower-regularity whole-space blowup \cite{Elgindi2021}, our datum is smooth and compactly supported, but no singularity is proved.  The point of the paper is the exact transition algebra and a reproducible finite-time validation problem with its remaining hypotheses exposed.
\section{Exact physical equations}
For axisymmetric Euler with swirl set
\[
\Gamma=ru^\theta,\qquad G=\frac{\omega^\theta}{r},\qquad
D_t=\partial_t+u^r\partial_r+u^z\partial_z.
\]
Then
\[
\boxed{D_t\Gamma=0},\qquad
\boxed{D_tG=r^{-4}\partial_z(\Gamma^2)},
\]
and with the lifted streamfunction $\psi_1$,
\[
-\left(\partial_r^2+\frac{3}{r}\partial_r+\partial_z^2\right)\psi_1=G,
\quad
u^r=-r\psi_{1,z},\qquad
u^z=2\psi_1+r\psi_{1,r}.
\]

\section{A smooth compact source-transition datum}
Let
\[
r_0=2^{20},\quad X=r-r_0,\quad Y=z,\quad R=(X^2+Y^2)^{1/2},
\]
with parameters
\[
q=1,\quad\varepsilon=\frac{2}{3},\quad\beta=1,\quad\rho=\frac{1}{5},
\quad\lambda=\frac{61}{50},\quad M=2048.
\]
Using fixed smooth cutoffs, take
\[
\psi_b=4\eta(R)(\sinh Y-Y)(X-1/20)+2XYh(R),
\]
\[
\Theta_b=d_{3.25,3.5}(R)\left[-20(1+\cosh Y)(X-1/20)-2\right].
\]
For the shield set
\[
\psi_h=2\rho XY\chi(R),\qquad
F(R)=-\int_R^\infty s\chi(s)\,ds,\qquad
\Psi=2\rho XF(R),
\]
and use the cylindrical correction
\[
\Theta_h^{\rm cyl}
=\left(\frac{r}{r_0}\right)^4\Theta_h^{\rm flat}
-\frac{3}{r_0}\left(\frac{r}{r_0}\right)^3\partial_X\Psi,
\]
which satisfies
\[
\left(\frac{r_0}{r}\right)^4\partial_Y\Theta_h^{\rm cyl}=-L_r\psi_h,
\qquad L_r=\partial_r^2+\frac{3}{r}\partial_r+\partial_z^2.
\]
The datum is
\[
\boxed{
\Gamma_{0,\zeta,\lambda}
=r_0^{3/2}b(R)\sqrt{M+\Theta_b+\zeta\Theta_h^{\rm cyl}},
\qquad
G_0=-\frac{1}{r_0}L_r\psi_b.}
\]
It is smooth, compactly supported away from the axis in the active variables,
and has finite physical energy.

\section{Certified initial source response}
The complete nonlinear whole-plane second response in the shield direction is
\[
J_2(\lambda)=\int\left[
H_hV_b\cdot\nabla K_A
+\Omega_bV_h\cdot\nabla K_A
+(\partial_YK_A)V_b\cdot\nabla\Theta_h
\right]\dd X\dd Y.
\]
For the specified datum the direct-current, induced-background and transported
scalar/source contributions sum numerically to about $0.9809181160$.

\medskip
\noindent\textbf{Positive initial second response.}
\begin{proposition}
The interval decomposition of the source-response terms gives
\[
\boxed{J_2>0.23138733456>0}.
\]
The corresponding full cylindrical diagnostic at $r_0=2^{20}$ is about
$0.9809185574$; the latter value is numerical and is not used as an interval
finite-time PDE enclosure.
\end{proposition}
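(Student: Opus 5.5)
The plan is to treat $J_2(\lambda)$ as a finite sum of three explicit planar integrals at $\lambda=61/50$: the direct-current term $\int H_hV_b\cdot\nabla K_A$, the induced-background term $\int \Omega_bV_h\cdot\nabla K_A$, and the transported scalar/source term $\int(\partial_YK_A)V_b\cdot\nabla\Theta_h$. Each integral gets its own rigorous lower bound, and the three bounds are added. All integrands are built from the explicit datum of the previous section. The stream function $\psi_b$ and shield $\psi_h=2\rho XY\chi(R)$ give the velocities $V_b,V_h$ and vorticities $\Omega_b,H_h$ by differentiation. $\Theta_b$ and $\Theta_h^{\rm flat}$ are explicit, and $K_A$ is the source kernel coming from $b(R)^2(M+\Theta_b)$. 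Because the cutoffs $\eta,h,\chi,d_{3.25,3.5},b$ are fixed and smooth, every integrand is a smooth function supported in a fixed disc $R\le R_*$. The shield contribution enters only through the tail $F$, which is explicit in $\chi$. So the whole-plane integral reduces to a compact-domain quadrature.

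The steps, in order, are as follows.
\begin{enumerate}[label=(\roman*)]
\item Write each integrand in closed form in $(X,Y)$ or in polar coordinates $(R,\phi)$. Integrate out the angular dependence exactly wherever the integrand is a trigonometric polynomial in $\phi$ times a radial cutoff. This happens for the $XY\chi(R)$ shield pieces and the $2XYh(R)$ piece of $\psi_b$. It leaves one-dimensional radial integrals plus a genuinely two-dimensional remainder coming from the $(\sinh Y-Y)$ and $\cosh Y$ factors.
\item Cover the support by a finite mesh of boxes. On each box, enclose the integrand with interval arithmetic, using a Taylor model or a mean-value form with derivative bounds on the cutoffs. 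Sum the box areas times the lower endpoints to get certified lower bounds $I_1^-,I_2^-,I_3^-$.
\item Verify $I_1^-+I_2^-+I_3^-\ge 0.23138733456$.
\end{enumerate}
The gap between this bound and the numerical value $\approx0.98$ shows the enclosure can be coarse. A modest mesh should therefore suffice, provided the enclosures stay honest.

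The cylindrical diagnostic at $r_0=2^{20}$ is not part of the claim. The proposition asserts only the flat whole-plane interval inequality, and the value $0.9809185574$ is a floating-point evaluation. I would remark that it differs from the planar value by $O(r_0^{-1})$, consistent with the correction term $-\tfrac{3}{r_0}(r/r_0)^3\partial_X\Psi$, but I would not prove anything about it.

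The main obstacle is making the interval enclosures of the cutoff functions rigorous. The functions $\eta,h,\chi,d_{3.25,3.5},b$ are "fixed smooth cutoffs" that are typically defined through $e^{-1/t}$-type gluing. Near the transition endpoints their derivatives are large, and naive interval evaluation overestimates badly there. My first attempt would be to split each box near a transition layer adaptively and use monotonicity of the gluing function to enclose it sharply. If the integrand is sign-indefinite in those layers, I would bound their total contribution from below by $-\sup|\text{integrand}|\cdot\text{area}$, which is affordable given the slack between $0.23$ and $0.98$.

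A second, smaller issue is $K_A$, which contains $\sqrt{M+\Theta_b}$. Since $M=2048$ dominates $|\Theta_b|$ on the support, the square root is analytic there with a uniform positive lower bound on its argument, so its interval evaluation is benign.
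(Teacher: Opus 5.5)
\textbf{Verdict: the plan has a genuine gap at the center.} Your split of $J_2$ into the direct-current, induced-background and transported-scalar integrals, each interval-enclosed and then summed, is the decomposition the statement refers to. You are also right that the cylindrical number is only a diagnostic. The gap is the premise that every integrand is a bounded smooth function. That premise comes from reading $K_A$ as a kernel built from $b(R)^2(M+\Theta_b)$, and this reading is almost certainly wrong.

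\textbf{What $K_A$ is.} The three terms have the structure $\int\omega\,u\cdot\nabla K_A$ and $\int(\partial_YK_A)\,u\cdot\nabla\theta$. They arise from differentiating the receiver pairings $\int K_A\omega$ and $\int K_A\partial_Y\theta$ along $\omega_t+u\cdot\nabla\omega=\partial_Y\theta$, $\theta_t+u\cdot\nabla\theta=0$. So $K_A$ is the receiver of the center strain $A$. Up to normalization it is the whole-plane kernel $XY/(\pi R^4)$, i.e.\ the kernel $H$ of the reference compression bound, or equivalently the kernel of $L_Af=\frac12\int_0^\infty f_2\,dr/r$. It is homogeneous of degree $-2$, with $|\nabla K_A|=1/(\pi R^3)$, and singular at the stagnation point $X=Y=0$, which lies inside the active support.

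\textbf{Why the box enclosure fails.} Near the center, $V_b$ is the linear strain of the $2XYh(R)$ piece. Any compactly supported $\Theta_h^{\rm flat}$ with $\partial_Y\Theta_h^{\rm flat}=-\Delta\psi_h$ must equal $-\Delta\Psi=-2\rho X(4\chi+R\chi')$, since $\partial_Y\Psi=\psi_h$. Hence $\nabla\Theta_h(0)=(-8\rho\chi(0),0)\neq0$, given that the shield carries strain $2\rho\chi(0)$ at the center.
\begin{itemize}
\item The third integrand is $c\cos\phi\cos3\phi/R^2+O(R^{-1})$ with $c\neq0$. Its angular mean vanishes, but it is not in $L^1$ near the center.
\item The second integrand, $\Omega_bV_h\cdot\nabla K_A$, is $O(R^{-1})$: integrable but unbounded.
\end{itemize}
So no box containing the center has a finite interval enclosure. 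Your fallback of bounding a bad region by $-\sup|\cdot|\times$area also fails there, because on any disc around the center even $\int|\cdot|$ is infinite.

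\textbf{What is missing: an explicit singular subtraction at the center.}
\begin{enumerate}
\item Fix the meaning of the third term as $\lim_{\delta\to0}\int_{\{R>\delta\}}$, and check that this is the convention of the physical response $\frac{d}{dt}\int K_A\omega$. Integrating $\int K_A\,\partial_Y(u\cdot\nabla\theta)$ by parts leaves the circle term $\oint_{R=\delta}K_A\,(u\cdot\nabla\theta)\,n_Y\,ds$. Its angular factor $\sin2\phi\cos\phi\sin\phi$ has nonzero mean, so it tends to a nonzero order-one constant, not to zero. This is the analogue of the $\frac12R_{\rm cw}\delta_0$ contact term in $DK$. The first two terms have no such issue, since $\omega u=O(R^2)$ there.
\item On $B_\delta$, subtract the first-order Taylor part of $V_b\cdot\nabla\Theta_h$ at the center. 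Paired with $\partial_YK_A\propto\cos3\phi/R^3$, it integrates to zero over every annulus.
\item The quadratic remainder, and the whole second integrand, are $O(R^{-1})$. They contribute at most $C\delta$, with $C$ given by explicit second-derivative bounds. This is the physical-ball subtraction idea of the exact-kernel section.
\item Only on $\{R\ge\delta\}$, where $|\nabla K_A|\le1/(\pi\delta^3)$, does your box enclosure apply, with adaptive refinement toward $R=\delta$.
\end{enumerate}
Your step (i), exact angular integration, is the right tool here. It must be applied to the leading Taylor part of the full integrand at the center, not only to the shield and $2XYh$ pieces. The cutoff transition layers you identify as the main obstacle are secondary by comparison.
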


\section{Exact three-moment return theorem}
Consider the retained fold subsystem
\[
\dot A=-A+f,\qquad \dot x=A,\qquad \dot e=8x-13e-A,
\qquad (A,x,e)(0)=0.
\]
Define
\[
M_0=\int_0^T f(t)\,dt,\quad
M_1=\int_0^T e^{-(T-t)}f(t)\,dt,\quad
M_{13}=\int_0^T e^{-13(T-t)}f(t)\,dt.
\]
\medskip
\noindent\textbf{Endpoint map and exact return.}
\begin{theorem}
\[
\boxed{A(T)=M_1,\quad x(T)=M_0-M_1,\quad
 e(T)=\frac{32M_0-39M_1+7M_{13}}{52}.}
\]
Consequently
\[
\boxed{(A,x,e)(T)=0\iff M_0=M_1=M_{13}=0.}
\]
\end{theorem}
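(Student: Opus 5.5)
The plan is to exploit linearity and the zero initial data: the endpoint values will be written as convolutions of $f$ against the impulse responses of the triangular system. The theorem then reduces to computing these kernels in closed form and reading off the three exponentials $1$, $e^{-\tau}$, $e^{-13\tau}$, which are exactly the weights defining $M_0$, $M_1$, $M_{13}$.

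\emph{Solving the first two equations.} By Duhamel's formula, $\dot A=-A+f$ with $A(0)=0$ gives $A(T)=\int_0^T e^{-(T-t)}f(t)\,dt=M_1$. Integrating $\dot x=A$ from $x(0)=0$ and applying Fubini gives
\[
x(T)=\int_0^T\!\!\int_0^t e^{-(t-s)}f(s)\,ds\,dt=\int_0^T f(s)\bigl(1-e^{-(T-s)}\bigr)\,ds=M_0-M_1 .
\]
So the impulse responses at lag $\tau=T-s$ are $A=e^{-\tau}$ and $x=1-e^{-\tau}$.

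\emph{Solving the third equation.} For $\dot e=-13e+(8x-A)$, the forcing has impulse response $8x-A=8-9e^{-\tau}$. Duhamel again gives the kernel
\[
k_e(\tau)=\int_0^\tau e^{-13(\tau-u)}\bigl(8-9e^{-u}\bigr)\,du=\tfrac{8}{13}\bigl(1-e^{-13\tau}\bigr)-\tfrac{9}{12}\bigl(e^{-\tau}-e^{-13\tau}\bigr).
\]
Collecting coefficients over the common denominator $52$ yields $\tfrac{32}{52}$ for the constant term and $-\tfrac{39}{52}$ for $e^{-\tau}$. For $e^{-13\tau}$ it yields $-\tfrac{32}{52}+\tfrac{39}{52}=\tfrac{7}{52}$. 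Finally $e(T)=\int_0^T k_e(T-s)f(s)\,ds$, and Fubini converts this into $(32M_0-39M_1+7M_{13})/52$. This partial-fraction bookkeeping is the only step where an arithmetic slip could occur, so I would double-check it, for instance by confirming that $k_e(0)=0$ and $k_e'(0)=8x(0)-A(0)=-1$.

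\emph{The equivalence.} The map $(M_0,M_1,M_{13})\mapsto(A,x,e)(T)$ is linear and triangular: $M_1$ is read off from $A(T)$, then $M_0$ from $x(T)$, then $M_{13}$ from $e(T)$. The coefficient of $M_{13}$ is $7/52\neq0$, so the determinant is nonzero. Hence $(A,x,e)(T)=0$ forces $M_1=0$, then $M_0=0$, then $M_{13}=0$. The converse is immediate from the formulas. I expect no genuine obstacle; the only point requiring care is the coefficient computation in the $e$ equation.
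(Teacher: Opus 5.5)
Your proof is correct and follows the paper's route: variation of constants for $A$, integration of $A$ (with Fubini) for $x$, and Duhamel with the $e^{-13(T-t)}$ kernel for $e$. Your explicit partial-fraction computation of $k_e$, with coefficients $32/52$, $-39/52$, $7/52$ and the consistency checks $k_e(0)=0$, $k_e'(0)=-1$, fills in the arithmetic the paper leaves implicit, and the triangular inversion you use for the equivalence is exactly what the paper's inverse formulas for $\mathcal L$ encode.
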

\begin{proof}
The first two formulas follow from variation of constants and
$x(T)=\int_0^TA$.  Substituting the resulting $A,x$ into the $e$ equation and
integrating the $e^{-13(T-t)}$ kernel gives the stated linear combination.
\end{proof}

\medskip
\noindent\textbf{Chebyshev chronology.}
\begin{theorem}
A nonzero finite-sign-pattern forcing satisfying
$M_0=M_1=M_{13}=0$ must have at least three sign changes, hence at least four
alternating nontrivial phases.
\end{theorem}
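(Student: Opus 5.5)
The plan is to rewrite the three moment conditions as orthogonality of $f$ against three exponentials, and then apply the Chebyshev (Descartes-type) sign-change property of that exponential system. Substitute $s=T-t\in[0,T]$ and set $g(s)=f(T-s)$. The conditions $M_0=M_1=M_{13}=0$ become
\[
\int_0^T g(s)\,e^{-\mu s}\,ds=0\qquad\text{for }\mu\in\{0,1,13\}.
\]
Thus $g$ is orthogonal in $L^2(0,T)$ to $\operatorname{span}\{1,e^{-s},e^{-13s}\}$. A sign change of $f$ at time $t$ corresponds to one of $g$ at $T-t$, so it suffices to bound the sign changes of $g$ from below.

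The key fact is that $\{1,e^{-s},e^{-13s}\}$ is an extended Chebyshev system on $\mathbb{R}$. Every nontrivial combination $p(s)=c_0+c_1e^{-s}+c_2e^{-13s}$ has at most two real zeros, counted with multiplicity. I would prove this by the standard Rolle induction on the number of distinct exponents. Multiply by $e^{s}$ to get $c_0e^{s}+c_1+c_2e^{-12s}$. Its derivative $c_0e^{s}-12c_2e^{-12s}$ is a two-term exponential sum, which has at most one zero unless it vanishes identically. Rolle's theorem then gives at most two zeros for $p$. Multiplicities are handled by the same argument, since multiplying by a positive function preserves zeros with multiplicity.

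Now suppose, for contradiction, that $g\not\equiv0$ has a finite sign pattern with at most two sign changes. Place the sign changes at $0<\tau_1<\tau_2<T$; if there are fewer, use only the ones that exist. By the Chebyshev property and the Haar interpolation condition, we can choose a nontrivial $p$ in the span that vanishes exactly at these points and changes sign there. This $p$ has at most two zeros, all of them simple, so it has the same sign as $g$ on each phase, up to replacing $p$ by $-p$. With fewer than two sign changes, we pick $p$ vanishing at the single point, or $p\equiv1$ if there is none, and adjust the remaining coefficient so that no further zeros appear in $(0,T)$. Then $g\,p\ge0$ almost everywhere. Since $g$ has nontrivial phases, $g\,p>0$ on a set of positive measure, so $\int_0^T g\,p\,ds>0$. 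This contradicts orthogonality. Hence $g$, and therefore $f$, has at least three sign changes, which means at least four alternating nontrivial phases.

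The main obstacle is constructing $p$ with prescribed simple zeros and no others on $(0,T)$, and making precise what a "finite-sign-pattern forcing" is. For existence, I would solve the $2\times3$ linear system $p(\tau_1)=p(\tau_2)=0$, which always has a nontrivial solution. The zero bound of two then forces these zeros to be simple and the only ones, so sign changes occur exactly there. For the definition, I would take a finite-sign-pattern forcing to be an integrable $f$ for which $[0,T]$ splits into finitely many intervals, each with $f\ge0$ or $f\le0$ and $f$ not a.e.\ zero on it, merging adjacent intervals of the same sign. The positivity of $\int g\,p$ then uses only that $p$ vanishes on a finite set.
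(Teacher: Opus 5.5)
Your proof is correct and is essentially the paper's argument. The paper substitutes $u=e^{-(T-t)}$, so that the three moments become orthogonality against the strict Chebyshev system $1,u,u^{13}$ (with the positive weight $du/u$), and then cites the standard fact that a nonzero signed measure orthogonal to three Chebyshev functions has at least three sign changes. You work with the equivalent exponential system $1,e^{-s},e^{-13s}$ and supply the details the paper leaves implicit: the Rolle bound on zeros and the construction of the sign-matching test function $p$.
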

\begin{proof}
After the change $u=e^{-(T-t)}$, the three moment rows are generated by the
strict Chebyshev system $1,u,u^{13}$.  A nonzero signed measure orthogonal to
three Chebyshev functions requires at least three sign changes.
\end{proof}

\section{Positive-response moment cone}
Let a nonnegative response measure be supported on $u\in[\ell,h]$.  Set
\[
\mathsf M=A+x,\qquad
\mathsf Q=\frac{52e+7A-32x}{7},\qquad
m=\frac{A}{\mathsf M},\qquad q=\frac{\mathsf Q}{\mathsf M},
\]
with $\mathsf M>0$.
\medskip
\noindent\textbf{Exact positive-response feasibility.}
\begin{proposition}
Such a response measure exists if and only if
\[
\boxed{\ell\le m\le h},
\]
\[
\boxed{m^{13}\le q\le
\frac{(h-m)\ell^{13}+(m-\ell)h^{13}}{h-\ell}}.
\]
\end{proposition}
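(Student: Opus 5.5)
The plan is to reduce the statement to a classical two-moment problem for probability measures on $[\ell,h]$, using the endpoint map of the exact three-moment return theorem. With the substitution $u=e^{-(T-t)}\in(0,1]$ from the Chebyshev chronology, a nonnegative response $f$ (or, more generally, a nonnegative measure $\mu$ in the variable $u$) gives
\[
M_0=\int d\mu,\qquad M_1=\int u\,d\mu,\qquad M_{13}=\int u^{13}\,d\mu .
\]
By the return theorem, $A=M_1$ and $x=M_0-M_1$, so $\mathsf M=A+x=M_0$. Substituting $52e=32M_0-39M_1+7M_{13}$ gives
\[
52e+7A-32x=7M_{13},
\]
hence $\mathsf Q=M_{13}$. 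For $\mathsf M>0$ the normalized measure $\nu=\mu/M_0$ is therefore a probability measure on $[\ell,h]$ with
\[
\int u\,d\nu=m,\qquad \int u^{13}\,d\nu=q .
\]
Conversely, any such $\nu$ rescaled by $\mathsf M$ returns the prescribed $(A,x,e)$. The proposition thus becomes: the moment set $\{(\int u\,d\nu,\int u^{13}d\nu)\}$ over probability measures on $[\ell,h]$ is exactly the region described by the boxed inequalities.

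\emph{Necessity.} Since $\nu$ is supported in $[\ell,h]$, the mean satisfies $\ell\le m\le h$. The function $\phi(u)=u^{13}$ is convex on $[\ell,h]\subset(0,1]$, so Jensen's inequality gives $q\ge m^{13}$. Convexity also gives the chord bound
\[
\phi(u)\le \frac{(h-u)\ell^{13}+(u-\ell)h^{13}}{h-\ell}\quad\text{on }[\ell,h].
\]
The right side is affine in $u$, so integrating against $\nu$ replaces $u$ by $m$ and yields the upper bound on $q$.

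\emph{Sufficiency.} The lower bound is attained by $\nu_0=\delta_m$, which gives $q=m^{13}$. The upper bound is attained by the two-point measure
\[
\nu_1=\frac{h-m}{h-\ell}\,\delta_\ell+\frac{m-\ell}{h-\ell}\,\delta_h,
\]
whose weights are nonnegative precisely because $\ell\le m\le h$. Both measures have mean $m$. Since the moments depend linearly on the measure, the convex combinations $(1-\theta)\nu_0+\theta\nu_1$, $\theta\in[0,1]$, keep mean $m$ and sweep the whole interval of $q$ continuously. Multiplying by $\mathsf M$ gives the required nonnegative response measure. The degenerate case $\ell=h$ forces $\nu=\delta_\ell$; there the chord expression should be read as its limit, and the statement reduces to $m=\ell$, $q=\ell^{13}$.

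The main obstacle I expect is not the convexity argument but the modelling convention. One must confirm that ``response measure'' permits atoms, that is, measures rather than $L^1$ forcings. If only absolutely continuous forcings are allowed, the extremal points become limits, and the boundary cases of the region are approached but not attained. I would state the result for measures, matching the wording of the proposition. I would also remark that the endpoints sit inside the time window, $0<\ell\le h\le 1$, so that the measure in $u$ pulls back to a measure in $t$ on $[0,T]$.
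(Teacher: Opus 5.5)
Your proof is correct and uses the paper's argument: Jensen's inequality gives the lower bound and the chord inequality for the convex function $u^{13}$ gives the upper bound. You also supply what the paper's one-line proof leaves implicit, namely the identification $\mathsf M=M_0$, $\mathsf Q=M_{13}$ via the return theorem, and the sufficiency direction through $\delta_m$, the two-point endpoint measure and their convex combinations. One small correction to your closing remark: for the measure in $u$ to pull back to a forcing on $[0,T]$ you need $[\ell,h]\subset[e^{-T},1]$, not merely $0<\ell\le h\le 1$.
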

The lower bound is Jensen's inequality and the upper bound is the chord bound
for the convex function $u^{13}$.

\section{Admission geometry in moment coordinates}
Define
\[
\mathcal L(M_0,M_1,M_{13})
=\left(M_1,M_0-M_1,\frac{32M_0-39M_1+7M_{13}}{52}\right).
\]
The inverse is
\[
M_0=A+x,\qquad M_1=A,\qquad
M_{13}=\frac{52e+7A-32x}{7}.
\]

\medskip
\noindent\textbf{Admission moment theorem.}
\begin{theorem}
Let $\mathcal C_{\rm descendant}\subset\mathbb R^3$ be any declared admissible set
for the fold variables $(A,x,e)$.  For a zero-entry fold state,
\[
\boxed{(A,x,e)(T)\in\mathcal C_{\rm descendant}
\iff (M_0,M_1,M_{13})\in\mathcal L^{-1}(\mathcal C_{\rm descendant}).}
\]
If $\mathcal C_{\rm descendant}$ has nonempty interior, so does its moment-space
preimage.  Thus a proved open descendant region converts exact codimension-three
shooting into a robust hit problem with margin.
\end{theorem}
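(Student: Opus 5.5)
The argument reduces to the exact return theorem together with the fact that $\mathcal L$ is an invertible linear map of $\mathbb R^3$.

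First, I invoke the endpoint map of the exact three-moment return theorem. For zero entry data $(A,x,e)(0)=0$ it gives $(A,x,e)(T)=\mathcal L(M_0,M_1,M_{13})$ identically in the forcing $f$. Therefore $(A,x,e)(T)\in\mathcal C_{\rm descendant}$ holds exactly when $\mathcal L(M_0,M_1,M_{13})\in\mathcal C_{\rm descendant}$. By the definition of preimage, this is the same as $(M_0,M_1,M_{13})\in\mathcal L^{-1}(\mathcal C_{\rm descendant})$. This step needs no hypothesis on $\mathcal C_{\rm descendant}$.

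Second, I check that $\mathcal L$ is a linear bijection with the stated inverse. I compose the displayed inverse formulas with $\mathcal L$. From $A=M_1$ and $x=M_0-M_1$ we recover $M_0=A+x$ and $M_1=A$. For the third coordinate,
\[
52e+7A-32x=32M_0-39M_1+7M_{13}+7M_1-32M_0+32M_1=7M_{13}.
\]
Hence the third inverse formula returns $M_{13}$. Equivalently, the matrix of $\mathcal L$ is lower-triangular after reordering the coordinates, with determinant $\pm 7/52\neq0$. So $\mathcal L$ is a homeomorphism of $\mathbb R^3$, and $\mathcal L^{-1}(\mathcal C)$ coincides with the image of $\mathcal C$ under the explicit inverse map.

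Third, for the interior statement, suppose $B$ is an open ball contained in $\mathcal C_{\rm descendant}$. Then $\mathcal L^{-1}(B)$ is open by continuity of $\mathcal L$, nonempty by surjectivity, and contained in the preimage of $\mathcal C_{\rm descendant}$. If $\operatorname{dist}(\mathcal L(m),\partial\mathcal C)\ge\delta$, then every perturbation $m'$ with $|m'-m|<\delta/\|\mathcal L\|$ stays admissible. This gives the quantitative margin behind the ``robust hit'' interpretation, in contrast with the codimension-three condition $M_0=M_1=M_{13}=0$ from the return theorem.

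No step is a genuine obstacle. The only points needing care are the arithmetic verification of the inverse and stating precisely what ``robust'' means: a quantitative ball radius in moment space via $\|\mathcal L\|$.
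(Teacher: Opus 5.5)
Your proposal is correct and matches the paper's reasoning, which gives no separate proof: the statement follows from the endpoint map of the three-moment return theorem together with the inverse of $\mathcal L$ displayed just before it. Your check that $52e+7A-32x=7M_{13}$, that $\det\mathcal L=-7/52\neq0$, and the quantitative margin $\delta/\|\mathcal L\|$ simply make explicit what the paper leaves unstated.
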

\begin{remark}
The true Euler descendant state is larger than $(A,x,e)$.  Halo/shield response,
source age and provenance, scale/clock, exterior and tail control,
recentering, cylindrical/Hodge errors, no-earlier-hit guards, and overdrive
upper bounds remain part of a physical descendant-admission theorem.
\end{remark}

\section{Conjugated source-active transition and adjoint}
On the sourced flat block, after the exact conjugacy,
\[
\theta_s+u\cdot\nabla\theta=0,
\qquad
\omega_s+u\cdot\nabla\omega=\gamma\mu\partial_2\theta,
\]
\[
u=\sigma\nabla^\perp(-\Delta)^{-1}\omega.
\]
For a transition parameter $\lambda$ acting on the passive initial source
shape, set $\vartheta=\partial_\lambda\theta$ and
$z=\partial_\lambda\partial_\gamma\omega|_{\gamma=0}$.  Then
\[
\vartheta_s+u\cdot\nabla\vartheta=0,
\]
\[
z_s+u\cdot\nabla z+v[z]\cdot\nabla\omega
=\mu\partial_2\vartheta,
\qquad v[z]=\sigma\nabla^\perp(-\Delta)^{-1}z.
\]
For the retained terminal fold receiver the backward current adjoint is
\[
-p_s-u\cdot\nabla p
-\sigma(-\Delta)^{-1}\operatorname{curl}(p\nabla\omega)
=2\sigma w_TK_2,
\qquad p(S)=0,
\]
and the passive pullback adjoint satisfies
\[
-q_s-u\cdot\nabla q=-\mu\partial_2p,
\qquad q(S)=0.
\]

\medskip
\noindent\textbf{One adjoint for transition and first-order provenance.}
\begin{theorem}
For any scalar admission margin $m(Y(T))$ whose terminal linear receiver is
used in the preceding adjoint, the transition derivative has a lower-cut
representation.  If the initial source is decomposed canonically by source
age/provenance,
\[
\theta_0=\sum_\alpha\theta_{0,\alpha},
\]
then
\[
\boxed{\partial_\lambda m=\sum_\alpha\partial_\lambda m_\alpha},
\qquad
\boxed{\partial_\lambda m_\alpha
=\langle q_m(0),\partial_\lambda\theta_{0,\alpha}\rangle}.
\]
Thus the same adjoint that certifies transition transversality also attributes
the first-order response to its physical source-age provenance.
\end{theorem}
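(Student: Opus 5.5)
The plan is a standard duality (Lagrange-identity) computation on the tangent-linear system, followed by linearity in the initial source. First I fix what "terminal linear receiver" means. The margin $m(Y(T))$ is differentiated along $\lambda$, and its first-order dependence on the terminal state enters only through the linear functional that appears as the right-hand side of the current adjoint:
\[
\partial_\lambda m=\langle 2\sigma w_TK_2,\,z\rangle_{L^2([0,S]\times\mathbb R^2)},
\]
in the sense that the receiver is encoded by the source term $2\sigma w_TK_2$ of the $p$-equation. I take this as the definition of "used in the preceding adjoint." Since only $\vartheta(0)=\partial_\lambda\theta_0$ depends on the perturbation, while $z(0)=0$ (the $\gamma$-derivative of $\omega_0$ vanishes), the goal is to move all the $z$-dependence onto $\vartheta(0)$.

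\textbf{Step 1: pair $z$ with $p$.} Pair the $z$-equation with $p$ and integrate over $[0,S]\times\mathbb R^2$, using incompressibility ($\nabla\cdot u=0$) to integrate the transport terms by parts. For the nonlocal term,
\[
\langle v[z]\cdot\nabla\omega,\,p\rangle=\langle z,\,\sigma(-\Delta)^{-1}\operatorname{curl}(p\nabla\omega)\rangle,
\]
up to the sign convention for $\nabla^\perp$ versus $\operatorname{curl}$. This holds because $(\nabla^\perp)^*=-\operatorname{curl}$ and $(-\Delta)^{-1}$ is self-adjoint. The boundary terms vanish: $p(S)=0$ and $z(0)=0$. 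Hence
\[
\langle z,\,2\sigma w_TK_2\rangle=\int_0^S\langle \mu\partial_2\vartheta,\,p\rangle\,ds.
\]

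\textbf{Step 2: pair $\vartheta$ with $q$.} Integrating by parts in $x_2$ gives $\langle\mu\partial_2\vartheta,p\rangle=\langle\vartheta,-\mu\partial_2p\rangle$. Now pair the $\vartheta$-equation with $q$, whose forcing is exactly $-\mu\partial_2p$. Integrating the transport terms by parts leaves only the boundary term at $s=0$, because $q(S)=0$. This yields
\[
\partial_\lambda m=\langle q(0),\partial_\lambda\theta_0\rangle=:\langle q_m(0),\partial_\lambda\theta_0\rangle,
\]
which is the lower-cut representation. For the "lower-cut" qualifier, I would note that if $m$ is only one-sidedly differentiable, for example a minimum over receivers, the same identity gives the derivative of each active branch. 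The lower derivative is then the minimum over active branches.

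\textbf{Step 3: decompose by provenance.} The key observation is that $p$ and $q$ depend only on the base flow $(u,\omega)$ and the receiver, never on $\vartheta$. So the map $\partial_\lambda\theta_0\mapsto\partial_\lambda m$ is linear. Define $m_\alpha$ by the same adjoint pairing with $\partial_\lambda\theta_{0,\alpha}$. Linearity of the pairing, together with $\partial_\lambda\theta_0=\sum_\alpha\partial_\lambda\theta_{0,\alpha}$ (a finite canonical decomposition), gives both boxed formulas. Uniqueness of the linear transport solutions shows the same split holds for $\vartheta$ and $z$ themselves.

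\textbf{Main obstacle.} I expect the hardest part to be justifying the integrations by parts. This requires enough regularity and decay of $u$, $\omega$, $p$ and $q$ on the finite block so that transport terms and the nonlocal Biot--Savart pairing produce no boundary or far-field contributions. In two dimensions, $(-\Delta)^{-1}$ needs care: I would work with $\nabla^\perp(-\Delta)^{-1}$ acting on compactly supported, zero-mean data, or state the pairing on $\dot H^{-1}$. I would also need to check the sign of $\operatorname{curl}$ against the stated adjoint equation.
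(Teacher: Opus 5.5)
The paper states this theorem without a written proof. Its adjoint pair ($p(S)=q(S)=0$, the $p$-equation forced by the receiver $2\sigma w_TK_2$, the $q$-equation forced by $-\mu\partial_2 p$) is set up for exactly your two Lagrange identities followed by linearity in $\partial_\lambda\theta_0$, so your argument is correct and is the intended one.

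The one fix is the sign you flagged. Use $\nabla^\perp=(-\partial_2,\partial_1)$, the convention matching the paper's clockwise kernel $K=(x_2,-x_1)/(2\pi|x|^2)$, so that $(\nabla^\perp)^*=-\operatorname{curl}$. Then $\langle v[z]\cdot\nabla\omega,p\rangle=-\langle z,\sigma(-\Delta)^{-1}\operatorname{curl}(p\nabla\omega)\rangle$. This minus sign is what cancels the nonlocal term in the paper's $p$-equation; your displayed identity without it would leave a residual $-2\sigma\langle z,(-\Delta)^{-1}\operatorname{curl}(p\nabla\omega)\rangle$.
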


\section{Exact fixed-label flat source dynamics}
\label{sec:fixed-label}

In the flat natural-time source-active block, write
\[
 \omega_s+v\cdot\nabla\omega=\partial_2\theta,
 \qquad
 \theta_s+v\cdot\nabla\theta=0,
 \qquad
 v=K*\omega-(K*\omega)(0),
\]
with
\[
 K(x)=\frac{(x_2,-x_1)}{2\pi|x|^2}.
\]
Let $Q$ be the area-preserving material map and set $B_s=Q_1$, $B(0)=0$.
Because $\det DQ=1$,
\[
 \boxed{
 \theta(s,Q(s,a))=\theta_0(a),\qquad
 \zeta(s,a):=\omega(s,Q(s,a))
 =\omega_0(a)+\{B(s,a),\theta_0(a)\}.
 }
\]
Changing variables in whole-plane Biot--Savart gives the exact closed system
\[
\boxed{
 Q_s(a)=\int_{B_8}
 [K(Q(a)-Q(b))-K(-Q(b))]
 [\omega_0(b)+\{B,\theta_0\}(b)]\,db,
 \qquad B_s=Q_1.
}
\]
Thus neither the scalar nor the source current is independently remapped: the
complete current history is encoded by the one material primitive $B$.

\medskip
\noindent\textbf{Exact restart without deleting source history.}
\begin{proposition}
At a smooth time $s_0$, compose with the area-preserving inverse map
$A_0=Q(s_0)^{-1}$ and define
\[
 Q^*(t,b)=Q(s_0+t,A_0(b)),
 \quad
 \theta_0^*(b)=\theta_0(A_0(b)),
\]
\[
 \omega_0^*(b)=\zeta(s_0,A_0(b)),
 \quad
 B^*(t,b)=B(s_0+t,A_0(b))-B(s_0,A_0(b)).
\]
Then $Q^*(0)=\mathrm{id}$, $B^*(0)=0$, $(B^*)_t=Q_1^*$ and
\[
 \zeta^*(t,b)=\omega_0^*(b)+\{B^*(t,b),\theta_0^*(b)\}.
\]
Inherited current has been updated rather than relabeled as new source.
\end{proposition}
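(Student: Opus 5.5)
We verify the three claims in turn from the definitions of the restarted objects, using only the group (flow) property of the material map and the fixed-label identity $\zeta(s,a)=\omega_0(a)+\{B(s,a),\theta_0(a)\}$ of \cref{sec:fixed-label}. The initial identities are immediate. Since $A_0=Q(s_0)^{-1}$, we get $Q^*(0,b)=Q(s_0,A_0(b))=b$, and $B^*(0,b)=B(s_0,A_0(b))-B(s_0,A_0(b))=0$. For the time derivative, $A_0$ does not depend on $t$, so $(B^*)_t(t,b)=B_s(s_0+t,A_0(b))=Q_1(s_0+t,A_0(b))=Q^*_1(t,b)$. Hence $Q^*$ is again an area-preserving flow map started at the identity, because it is a composition of area-preserving maps. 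Also $\theta(s_0+t,Q^*(t,b))=\theta_0(A_0(b))=\theta_0^*(b)$, so the restarted problem has exactly the structure of the original one.

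The substantive step is the bracket identity. Define $\zeta^*(t,b):=\omega(s_0+t,Q^*(t,b))$, which equals $\zeta(s_0+t,A_0(b))$. The fixed-label formula gives
\[
\zeta^*(t,b)=\omega_0(a)+\{B(s_0+t,a),\theta_0(a)\}\big|_{a=A_0(b)},
\]
and $\omega_0^*(b)=\omega_0(a)+\{B(s_0,a),\theta_0(a)\}|_{a=A_0(b)}$. Subtracting these, and using that the bracket is linear in its first slot, gives
\[
\zeta^*(t,b)-\omega_0^*(b)=\{B(s_0+t,\cdot)-B(s_0,\cdot),\theta_0\}(A_0(b)).
\]

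It remains to move the Poisson bracket through the relabeling $A_0$. The key fact is that for an area-preserving map $\Phi$ (here $\det DA_0=1$) one has $\{f\circ\Phi,g\circ\Phi\}=\{f,g\}\circ\Phi$. This holds because $\{f,g\}=\nabla f^{\top}J\nabla g$, and the chain rule gives $\{f\circ\Phi,g\circ\Phi\}=(\nabla f)^{\top}(D\Phi)J(D\Phi)^{\top}\nabla g$. In two dimensions $PJP^{\top}=(\det P)J$, so the right side reduces to $\{f,g\}\circ\Phi$ when $\det P=1$.

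We apply this with $f=B(s_0+t,\cdot)-B(s_0,\cdot)$, $g=\theta_0$ and $\Phi=A_0$. Then $f\circ A_0=B^*(t,\cdot)$ and $g\circ A_0=\theta_0^*$, which yields $\zeta^*=\omega_0^*+\{B^*,\theta_0^*\}$.

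The main obstacle is making sure that the brackets in \cref{sec:fixed-label} are taken in the label variable $a$, and that the identity $\det DQ=1$ passes to the inverse $A_0$. Both hold: $\det DA_0=1/\det DQ(s_0)=1$, and $A_0$ is smooth at a smooth time $s_0$. With the bracket fixed as a label-space bracket and $\det DA_0=1$ established, the computation above is complete. The final sentence of the statement is then interpretive. It records that the current accumulated up to $s_0$ sits in $\omega_0^*$, while $\theta_0^*$ is only relabeled and no new source is introduced.
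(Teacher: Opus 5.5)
Your proof is correct. The paper states this proposition without a written proof, and your argument fills it in with the ingredients the paper points to: the fixed-label identity, the condition $\det DA_0=1$, and transport of the bracket through $A_0$ via $PJP^{\top}=(\det P)J$ (one could equivalently rerun the fixed-label derivation directly for $Q^*$, which also has unit Jacobian). The only inaccuracy is cosmetic: despite your opening sentence, no group/flow property of $Q$ is used, since $\zeta^*(t,b)=\zeta(s_0+t,A_0(b))$ follows straight from the definitions.
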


\section{Exact-kernel contact and product integration}
\label{sec:exact-kernel}
Let
\[
R_{\rm cw}=\begin{pmatrix}0&1\\-1&0\end{pmatrix}.
\]
The Biot--Savart derivative has the distributional decomposition
\[
 \boxed{
 DK=\operatorname{p.v.}DK_{\rm cl}+\frac12R_{\rm cw}\delta_0,
 }
\]
so for smooth compact current
\[
 \boxed{
 Du(x)=\frac{\omega(x)}2R_{\rm cw}
 +\operatorname{p.v.}\int DK_{\rm cl}(x-y)\omega(y)\,dy.
 }
\]
The contact is pure rotation; all symmetric strain lies in the principal-value
part.

\medskip
\noindent\textbf{Physical-ball singular subtraction.}
\begin{lemma}
If $|\omega(y)-\omega(x)|\le L|y-x|$, then for every physical radius
$\delta>0$ the exact near-field decomposition gives
\[
 |u_{\rm near}(x)|\le\frac12L\delta^2,
\]
and
\[
 \left\|Du_{\rm near}(x)-\frac{\omega(x)}2R_{\rm cw}\right\|
 \le L\delta.
\]
No blob/core parameter is introduced.
\end{lemma}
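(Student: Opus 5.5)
We interpret the near field as the contribution of the ball $B_\delta(x)$:
\[
u_{\rm near}(x)=\int_{|y-x|<\delta}K(x-y)\,\omega(y)\,dy .
\]
The derivative is taken in the distributional sense of the decomposition $DK=\operatorname{p.v.}DK_{\rm cl}+\tfrac12R_{\rm cw}\delta_0$ above, restricted to the ball:
\[
Du_{\rm near}(x)=\frac{\omega(x)}2R_{\rm cw}+\operatorname{p.v.}\int_{|y-x|<\delta}DK_{\rm cl}(x-y)\,\omega(y)\,dy .
\]
The plan is to subtract the constant $\omega(x)$ inside the ball and use two cancellations: $K$ is odd, and $DK_{\rm cl}$ has zero mean on circles.

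\textbf{Velocity bound.} Write $\omega(y)=\omega(x)+[\omega(y)-\omega(x)]$. Since $K(-w)=-K(w)$, the constant part integrates to zero over the symmetric ball:
\[
\int_{|w|<\delta}K(w)\,dw=0 .
\]
The remainder is bounded pointwise using $|K(w)|=1/(2\pi|w|)$ and the Lipschitz hypothesis $|\omega(y)-\omega(x)|\le L|y-x|$:
\[
|u_{\rm near}(x)|\le\int_{|w|<\delta}\frac{1}{2\pi|w|}\,L|w|\,dw=\frac{L}{2\pi}\,\pi\delta^2=\frac12L\delta^2 .
\]
This gives the first estimate exactly.

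\textbf{Gradient bound.} Subtracting the contact term leaves the principal-value integral. The entries of $DK_{\rm cl}(w)$ are homogeneous of degree $-2$ and are combinations of $w_iw_j/|w|^4$ and $\delta_{ij}/|w|^2$. Since $\int_{S^1}(2\hat w_i\hat w_j-\delta_{ij})\,d\hat w=0$, each entry has zero angular mean. Hence the principal value of the constant part vanishes on every annulus $\epsilon<|w|<\delta$, and
\[
Du_{\rm near}(x)-\frac{\omega(x)}2R_{\rm cw}=\int_{|w|<\delta}DK_{\rm cl}(w)\,[\omega(x-w)-\omega(x)]\,dw .
\]
This integral is absolutely convergent, because the integrand is $O(L|w|^{-1})$.

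\textbf{Main obstacle: the constant.} The crude pointwise bound $\|DK_{\rm cl}(w)\|\le C/|w|^2$ gives $\le CL\delta$. To obtain exactly $L\delta$ we need $C\le 1/(2\pi)$ in polar coordinates, so the operator norm of $DK_{\rm cl}(w)$ must be computed. Write $K=\frac1{2\pi}\nabla^\perp\log|w|$. Then $DK_{\rm cl}(w)$ is $\frac{1}{2\pi|w|^2}$ times a symmetric-type matrix built from $\hat w\hat w^T-\tfrac12 I$ and a rotation. Its eigenvalues are $\pm1$ in magnitude, so the operator norm is exactly $\frac1{2\pi|w|^2}$. This normalisation check is where care is needed, since it relies on the standard facts that the contact term carries the trace and antisymmetric part and the p.v. part is trace-free. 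With it,
\[
\int_{|w|<\delta}\frac{1}{2\pi|w|^2}\,L|w|\,dw=\frac{L}{2\pi}\cdot2\pi\delta=L\delta .
\]

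Finally, $\delta$ enters only as the physical radius of integration. No regularised kernel or core parameter appears anywhere in the argument, which justifies the last sentence of the lemma.
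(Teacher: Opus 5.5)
Your proof is correct. The paper states this lemma without proof and relies only on the decomposition $DK=\operatorname{p.v.}DK_{\rm cl}+\frac12R_{\rm cw}\delta_0$ displayed just before it. Your argument is the one the statement presupposes: subtract $\omega(x)$, remove the constant part using the oddness of $K$ and the zero angular mean of $DK_{\rm cl}$, then integrate the sharp sizes $|K(w)|=1/(2\pi|w|)$ and $\|DK_{\rm cl}(w)\|_{\rm op}=1/(2\pi|w|^2)$ against $L|w|$. Your normalisation check is right. $DK_{\rm cl}(w)$ is $1/(2\pi|w|^2)$ times a symmetric reflection matrix, so the constant $L\delta$ holds in the operator norm. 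This is the convention the paper also uses for $\|D^2K\|=1/(\pi|x|^3)$. The Frobenius norm would cost a factor $\sqrt2$.

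One remark strengthens your opening interpretation. Reading $Du_{\rm near}(x)$ as the contact term plus the ball-restricted principal value is not just a choice but necessary. The $x$-derivative of $u_{\rm near}(x)=\int_{B_\delta(x)}K(x-y)\omega(y)\,dy$ picks up an extra circle term $-\int_{|w|=\delta}K(w)\otimes\hat w\,\omega(x-w)\,dS$, whose constant part is $-\frac{\omega(x)}2R_{\rm cw}$. Now take $\omega$ constant and nonzero near $\overline{B_\delta(x)}$, so that $L=0$. In that reading the derivative is $0$, and the stated inequality would fail.
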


The sharp second-kernel size is
\[
 \boxed{\|D^2K(x)\|=\frac1{\pi|x|^3}.}
\]
For an affine image of a material polygon with constant current, both the
velocity and velocity-gradient integrals reduce to closed logarithm/arctangent
boundary expressions. In particular the affine self-cell velocity may vanish
by oddness while its Jacobian remains order one.

\medskip
\noindent\textbf{Self and off-diagonal Taylor-panel remainder bounds.}
\begin{theorem}
Let a material cell of side $h$ have local map derivative $A=DQ(a)$ with
$\sigma_{\min}(A)\ge m>0$, local Hessian bound $M$, and current Lipschitz bound
$L_\zeta$. After subtracting the exact affine constant-current self panel, the
self-gradient error satisfies
\[
\boxed{
 \|E^{Du}_{\rm self}\|
 \le4h\log(1+\sqrt2)
 \left[
 \frac{4|\zeta(a)|M}{\pi m^3}
 +\frac{2L_\zeta}{\pi m^2}
 \right].
}
\]
For an off-diagonal cell of area $|C_j|$, label radius $\rho$, affine-panel
distance $d_{ij}$ and map remainder $R_{\max}$ with
$R_{\max}\le d_{ij}/2$,
\[
 E^u_{ij}\le |C_j|
 \left[
 \frac{2|\zeta_j|R_{\max}}{\pi d_{ij}^2}
 +\frac{L_{\zeta,j}\rho}{\pi d_{ij}}
 \right],
\]
\[
 E^{Du}_{ij}\le |C_j|
 \left[
 \frac{8|\zeta_j|R_{\max}}{\pi d_{ij}^3}
 +\frac{2L_{\zeta,j}\rho}{\pi d_{ij}^2}
 \right].
\]
Thus the singular kernel itself is no longer an unspecified numerical defect;
the remaining validation objects are local material Taylor bounds and smooth
far-panel quadrature.
\end{theorem}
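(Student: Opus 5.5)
The proof splits into a self-cell bound and an off-diagonal bound. In both, the velocity and its gradient at the target point are written as integrals over label space. Because $\det DQ=1$, the integrals are $\int K(Q(a)-Q(b))\zeta(b)\,db$ and the corresponding $DK$ integral. From each we subtract the exact affine constant-current panel, in which $Q(b)$ is replaced by $Q(a)+A(b-a)$ and $\zeta(b)$ by $\zeta(a)$ (or $\zeta_j$). The remainder then splits into two pieces: a \emph{geometric} part from the map nonlinearity, $Q(b)-Q(a)-A(b-a)$, and a \emph{current} part from $\zeta(b)-\zeta_j$. By Taylor's theorem these are bounded by $\tfrac12M|b-a|^2$ and $L_\zeta|b-a|$ respectively.

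\textbf{Off-diagonal cells.} We use the mean-value theorem along the segment between the affine image point $y_0$ and the true image point $y$, where $|y-y_0|\le R_{\max}$. The hypothesis $R_{\max}\le d_{ij}/2$ keeps every point on this segment at distance at least $d_{ij}/2$ from $x$. This gives
\[
|K(x-y)-K(x-y_0)|\le \sup\|DK\|\,R_{\max}\le \frac{1}{2\pi(d_{ij}/2)^2}R_{\max}=\frac{2R_{\max}}{\pi d_{ij}^2},
\]
using $\|DK(x)\|=1/(2\pi|x|^2)$. Likewise, the sharp bound $\|D^2K\|=1/(\pi|x|^3)$ gives a gradient-kernel difference of at most $8R_{\max}/(\pi d_{ij}^3)$. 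The current part contributes $|K|\cdot L_{\zeta,j}\rho\le L_{\zeta,j}\rho/(2\pi d_{ij})$, and $\|DK\|L_{\zeta,j}\rho$ for the gradient. These fit inside the stated constants $1/(\pi d_{ij})$ and $2/(\pi d_{ij}^2)$; the extra factor of $2$ presumably absorbs the shift from $d_{ij}$ to $d_{ij}/2$ for the true point. Integrating over $C_j$ produces the factor $|C_j|$.

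\textbf{Self cell.} Here we cannot use a pointwise mean-value bound, so we treat the singular integral directly. We write $y-x=A(b-a)+r(b)$ with $|r|\le\tfrac12M|b-a|^2$, and use $|A(b-a)|\ge m|b-a|$ to get $|y-x|\ge m|b-a|/2$ for $b$ near $a$.

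For the current part, the principal-value structure is not needed: its integrand is bounded by $\|DK(y-x)\|L_\zeta|b-a|\le L_\zeta/(2\pi m^2|b-a|)$ after accounting for the $\omega/2$ contact term, which is identical in both. For the geometric part, we bound the difference of $DK$ at $x-y$ and at $-A(b-a)$ by $\|D^2K\|\cdot|r|\lesssim \frac{1}{\pi m^3|b-a|^3}\cdot M|b-a|^2=\frac{M}{\pi m^3|b-a|}$.

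Both parts therefore reduce to the integral $\int_{\text{square of side }h}|b-a|^{-1}\,db$. With $a$ at the worst position, this is at most the value for a square centered at $a$, namely $4h\log(1+\sqrt2)$: by symmetry it is $8\int_0^{\pi/4}\int_0^{h/(2\cos\phi)}dr\,d\phi=4h\log(1+\sqrt2)$. Multiplying by the prefactors, with the constants $4$ and $2$ absorbing the factor-of-two losses from the lower bound $|y-x|\ge m|b-a|/2$ and from the intermediate points, gives the boxed estimate.

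\textbf{Main obstacle.} The hard step is the self-cell geometric term. First, the lower bound $|y-x|\gtrsim m|b-a|$ on the whole cell requires $Mh\lesssim m$. This is an implicit smallness condition that must be imposed or derived from the Taylor bound. Second, one must justify that subtracting the affine panel removes the principal-value singularity: the difference of the two p.v. integrals must be shown to be an absolutely convergent integral, since the delta contact $\tfrac12R_{\rm cw}\zeta(a)$ cancels exactly. I would handle this by excising a small ball $|b-a|<\epsilon$. On that ball the two principal-value contributions agree to order $\epsilon$ by the physical-ball singular subtraction lemma, and we then let $\epsilon\to0$.
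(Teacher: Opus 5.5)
The paper states this theorem without proof, so there is no argument of the author's to compare with. Your route is the natural one: pull back using $\det DQ=1$, subtract the affine constant-current panel, split into a map-remainder part and a current part, and apply the sharp norms $\|DK(x)\|=1/(2\pi|x|^2)$, $\|D^2K(x)\|=1/(\pi|x|^3)$ away from the singularity. Done carefully, it reproduces every constant exactly, with no slack to absorb anything.

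The split that achieves this freezes the current in the geometric term and evaluates the current difference at the \emph{true} image point:
\[
K(x-Q(b))\zeta(b)-K(x-Q_{\rm aff}(b))\zeta_j=\bigl[K(x-Q(b))-K(x-Q_{\rm aff}(b))\bigr]\zeta_j+K(x-Q(b))\bigl(\zeta(b)-\zeta_j\bigr).
\]
Since $|x-Q(b)|\ge d_{ij}-R_{\max}\ge d_{ij}/2$, this gives $\frac{1}{2\pi(d_{ij}/2)^2}=\frac{2}{\pi d_{ij}^2}$, $\frac{1}{2\pi(d_{ij}/2)}=\frac{1}{\pi d_{ij}}$, $\frac{1}{\pi(d_{ij}/2)^3}=\frac{8}{\pi d_{ij}^3}$ and again $\frac{2}{\pi d_{ij}^2}$. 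These are precisely the four off-diagonal coefficients.

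Your written off-diagonal computation instead puts $K$ at the affine point in the current term. That forces $\zeta(b)$ into the geometric term, and the resulting cross term $2L_{\zeta,j}\rho R_{\max}/(\pi d_{ij}^2)$ (resp.\ $8L_{\zeta,j}\rho R_{\max}/(\pi d_{ij}^3)$) does not fit the stated constants when $R_{\max}$ is near $d_{ij}/2$.

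In the self cell you already use the right split. With $|A(b-a)+t\,r(b)|\ge\frac m2|b-a|$ one gets exactly $\frac{1}{\pi(m/2)^3}\cdot\frac M2=\frac{4M}{\pi m^3}$ and $\frac{1}{2\pi(m/2)^2}=\frac{2}{\pi m^2}$, times $\int_C|b-a|^{-1}\,db=4h\log(1+\sqrt2)$ for $a$ the cell center. Your claim that an off-center $a$ gives a smaller integral is true, but symmetry alone does not show it. It follows by centering one coordinate at a time, since $|b-a|^{-1}$ is symmetric decreasing in each variable.

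Two points genuinely need repair.

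First, the bound $|A(b-a)+t\,r(b)|\ge m|b-a|-\frac12M|b-a|^2\ge\frac m2|b-a|$ on the whole cell requires $M\sup_{b\in C}|b-a|\le m$, i.e.\ $Mh\le\sqrt2\,m$ for a centered cell. This cannot be derived from the stated hypotheses. An area-preserving map with $DQ(a)=I$ and large $M$ can bend the cell so that labels at distance comparable to $h$ land arbitrarily close to $Q(a)$, and then your pointwise integrand bound fails. It must be added as a hypothesis; the statement as printed omits it, and your argument proves the theorem only under it.

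Second, your principal-value step is aimed at the wrong object. The image of a label ball $|b-a|<\epsilon$ is an ellipse or near-ellipse, not a physical ball. A constant-current ellipse has a nonzero, scale-invariant strain at its center: for semi-axes $s_1,s_2$ it is proportional to $(s_1-s_2)/(s_1+s_2)$. So the physical-ball lemma does not make the two near contributions agree.

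What is needed is a sliver estimate:
\begin{itemize}
\item The two p.v.\ integrals are limits over $C\setminus T_\epsilon$ and $C\setminus E_\epsilon$, where $T_\epsilon=\{|Q(b)-Q(a)|<\epsilon\}$ and $E_\epsilon=\{|A(b-a)|<\epsilon\}$.
\item Under the smallness condition, $T_\epsilon\triangle E_\epsilon\subset\{\,\bigl||A(b-a)|-\epsilon\bigr|\le 2M\epsilon^2/m^2\}$. Since $\det A=1$, this set has area $O(\epsilon^3)$, and on it $\|DK(x-Q(b))\|=O(\epsilon^{-2})$.
\item Hence the difference of the p.v.\ integrals equals the absolutely convergent label integral of the difference of integrands, which is bounded by a constant times $|b-a|^{-1}$.
\item The two contact terms $\frac12R_{\rm cw}\zeta(a)$ cancel identically, because both panels carry current $\zeta(a)$ at $x$.
\end{itemize}
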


The retained shield kernel vanishes at its receiver boundary. Cells crossing
that boundary can therefore be split until a contact-thickness allowance is
subordinate to the output margin.

\begin{remark}
At $\tau=0.001$, affine-panel product integration and independent physical-ball
subtraction both place the retained strain row near $-1.5\times10^{-2}$ and the
shield near $8\times10^{-3}$; time refinement is already much smaller than the
spatial/product uncertainty. These are cross-method numerical regressions,
not interval PDE enclosures. At later times the shield is relatively stable
but the strain remains spatially sensitive.
\end{remark}

\section{Material deformation as a continuation discriminator}
\label{sec:deformation-continuation}
Let $P=D_aQ$. Transport of the scalar gives
\[
 \nabla_x\theta(s,Q(s,a))=P(s,a)^{-T}\nabla_a\theta_0(a).
\]
Since $\det P=1$ in two dimensions,
$\|P^{-T}\|_{\rm op}=\|P\|_{\rm op}$. Moreover
\[
 \nabla_aB(s,a)=\int_0^sP_{1\cdot}(\tau,a)\,d\tau.
\]
Hence the fixed-label formula for $\zeta$ gives
\[
 \|\omega(s)\|_\infty
 \le\|\omega_0\|_\infty
 +\|\nabla\theta_0\|_\infty
 \int_0^s\|P(\tau)\|_{L^\infty_a}\,d\tau.
\]

\medskip
\noindent\textbf{Material-stretch continuation criterion.}
\begin{theorem}
If
\[
 \boxed{
 \int_0^{T_*}\|D_aQ(s)\|_{L^\infty_a}\,ds<\infty,
 }
\]
then the flat source-active block continues through $T_*$. Consequently every
finite-time singularity of this block must have nonintegrable material
deformation. Equivalently, with
$\kappa_Q=\|D_aQ\|_{L^\infty}^2$, a singularity requires
\[
 \int_0^{T_*}\sqrt{\kappa_Q(s)}\,ds=\infty.
\]
\end{theorem}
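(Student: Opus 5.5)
The plan is a Beale--Kato--Majda type argument for the flat source-active block, with the displayed bound on $\|\omega(s)\|_\infty$ as the input. Suppose $\int_0^{T_*}\|D_aQ(s)\|_{L^\infty_a}\,ds<\infty$ and the solution is smooth on $[0,T_*)$. The fixed-label identity of Section~\ref{sec:fixed-label},
\[
\zeta(s,a)=\omega_0(a)+\{B(s,a),\theta_0(a)\},
\]
together with $\nabla_aB=\int_0^sP_{1\cdot}\,d\tau$ and $|\{B,\theta_0\}|\le|\nabla_aB|\,|\nabla_a\theta_0|$, gives the displayed estimate. Hence
\[
\sup_{s<T_*}\|\omega(s)\|_\infty<\infty,
\qquad\text{so}\qquad
\int_0^{T_*}\|\omega(s)\|_\infty\,ds<\infty .
\]
In addition, since $\det P=1$ gives $\|P^{-T}\|_{\rm op}=\|P\|_{\rm op}$, the transport formula yields
\[
\|\nabla\theta(s)\|_\infty\le\|P(s)\|_{L^\infty_a}\,\|\nabla\theta_0\|_\infty .
\]
Therefore $\int_0^{T_*}\|\nabla\theta\|_\infty\,ds<\infty$ as well.

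The next step is to invoke the continuation criterion for two-dimensional Boussinesq-type systems. This is the structure here: $\omega$ is driven by $\partial_2\theta$, $\theta$ is transported, and $v$ is given by Biot--Savart modulo the constant $(K*\omega)(0)$. That constant is a spatially uniform translation and does not affect any Sobolev or H\"older norms. The standard criterion says that a smooth, compactly supported solution continues past $T_*$ provided $\int_0^{T_*}\|\nabla\theta\|_\infty\,ds<\infty$. Equivalently, one can take $\int_0^{T_*}(\|\omega\|_\infty+\|\nabla\theta\|_\infty)\,ds<\infty$, since the vorticity bound is then automatic.

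To make this self-contained, I would run the energy argument in $H^k$, $k\ge3$. Commutator estimates give
\[
\frac{d}{ds}\bigl(\|\omega\|_{H^k}+\|\theta\|_{H^{k+1}}\bigr)\lesssim\bigl(1+\|\nabla v\|_\infty+\|\nabla\theta\|_\infty\bigr)\bigl(\|\omega\|_{H^k}+\|\theta\|_{H^{k+1}}\bigr).
\]
I would then control $\|\nabla v\|_\infty$ by the logarithmic Beale--Kato--Majda inequality,
\[
\|\nabla v\|_\infty\lesssim 1+\|\omega\|_{L^2\cap L^\infty}\log\bigl(e+\|\omega\|_{H^k}\bigr).
\]
The $L^2$ norm of $\omega$ is controlled from the fixed-label formula by area preservation. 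Gr\"onwall in the logarithmic form then gives finite Sobolev norms at $T_*$, and local well-posedness restarts the block. The restart is consistent with the exact restart proposition, which carries the source history into the new labels.

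The contrapositive gives the first consequence: a singularity at $T_*$ forces $\int_0^{T_*}\|D_aQ\|_{L^\infty}\,ds=\infty$. The reformulation in terms of $\kappa_Q$ is immediate, since $\sqrt{\kappa_Q}=\|D_aQ\|_{L^\infty}$.

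I expect the main obstacle to be ensuring that the energy argument genuinely closes using only $\|D_aQ\|_{L^\infty}$. The $H^{k+1}$ estimate for $\theta$ involves $\nabla v$, which we control logarithmically. The $H^k$ estimate for $\omega$ involves the source term $\|\partial_2\theta\|_{H^k}$, which is linear in the top norm and therefore harmless. A secondary check is that the whole-plane kernel with subtraction of $(K*\omega)(0)$ keeps the velocity well defined. This holds because $\omega$ stays compactly supported: the support is transported by $Q$, and $Q$ remains bounded since $v$ is bounded.
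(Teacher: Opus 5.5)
Your proposal is correct and follows the paper's route. The paper likewise combines two ingredients: the transport identity $\nabla_x\theta(s,Q)=P^{-T}\nabla_a\theta_0$ with $\|P^{-T}\|_{\rm op}=\|P\|_{\rm op}$, and the fixed-label formula $\zeta=\omega_0+\{B,\theta_0\}$ with $\nabla_aB=\int_0^sP_{1\cdot}\,d\tau$. From integrable material deformation it obtains bounded vorticity and integrable $\|\nabla\theta\|_\infty$, and then relies on the standard Boussinesq/Beale--Kato--Majda continuation criterion, which it leaves implicit; your $H^k$ energy estimate with the logarithmic inequality simply supplies that omitted step.
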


For validation one need not promote derivatives to new physical state
variables. $P=DQ$, $\nabla B$, $D^2Q$ and $\nabla\zeta$ are metadata. If
$P_s=Du(Q)P$ and $G(t)=\int_0^t\|Du\|_\infty$, then
\[
 e^{-G(t)}\le\sigma_{\min}P(t)\le\sigma_{\max}P(t)\le e^{G(t)},
\]
while
\[
 (D^2Q)_s=D^2u(Q)[P,P]+Du(Q)D^2Q.
\]
This gives a closed bootstrap dependency graph for the Taylor-panel remainder
bounds.

\section{Detector conditioning and the absolute finite-time response}
\label{sec:detector-conditioning}

The coupled source-active tangent is physically meaningful, but the commonly
used ratio
\[
 \frac{|a_+(T)|}{|a_+(0)|}
\]
is not an invariant response measure when the denominator is a receiver
projection that may cancel.

\medskip
\noindent\textbf{Receiver-zero conditioning obstruction.}
\begin{proposition}
Let $O_r$ be a continuous one-parameter family of smooth terminal/initial
receivers and let $B$ be a fixed legal perturbation direction.  If for some
$r_0$
\[
 O_{r_0}B=0,
 \qquad
 O_{r_0}\mathcal U(T,0)B\ne0,
\]
then the ratio
\[
 \frac{|O_r\mathcal U(T,0)B|}{|O_rB|}
\]
is unbounded as $r\to r_0$ along values for which the denominator is nonzero.
Thus such a ratio cannot serve as a robust modal gain without an independent
uniform lower bound on the initial receiver projection.
\end{proposition}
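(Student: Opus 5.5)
The argument is a continuity statement: the numerator stays away from zero near $r_0$, while the denominator is forced to zero. The plan is to isolate these two scalar functions of $r$ and compare them directly. Set
\[
N(r)=O_r\mathcal U(T,0)B,\qquad D(r)=O_rB .
\]
Since $\mathcal U(T,0)B$ and $B$ are fixed vectors, independent of $r$, continuity of the family $r\mapsto O_r$ makes $N$ and $D$ continuous scalar functions of $r$. I will take this to mean continuity of $O_r$ in the operator or weak sense, tested on the fixed smooth legal states $B$ and $\mathcal U(T,0)B$. Justifying that this pairing is well defined is the only point needing care: it uses that $\mathcal U(T,0)B$ is again a smooth legal state on which the receivers act continuously.

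\textbf{Step 1: the numerator is bounded below.} By hypothesis $|N(r_0)|=c>0$. Continuity of $N$ gives a neighbourhood $|r-r_0|<\delta$ on which $|N(r)|\ge c/2$.

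\textbf{Step 2: the denominator tends to zero.} By hypothesis $D(r_0)=0$, so continuity gives $D(r)\to0$ as $r\to r_0$.

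\textbf{Step 3: conclude.} Take any sequence $r_k\to r_0$ with $D(r_k)\ne0$. For large $k$ the sequence lies in the neighbourhood of Step 1, so
\[
\frac{|N(r_k)|}{|D(r_k)|}\ge\frac{c}{2|D(r_k)|}\to\infty .
\]
Hence the ratio is unbounded along every admissible approach to $r_0$.

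If instead $D$ vanishes identically near $r_0$, there are no admissible values of $r$ and the claim is vacuous. In that case the ratio is not even defined, which only strengthens the stated conclusion that it cannot serve as a modal gain.

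The closing sentence of the proposition follows from the same bound. If one had a uniform lower bound $|O_rB|\ge\kappa>0$, the ratio would be at most $\sup_r|N(r)|/\kappa$, which is finite on compact parameter sets. The blow-up above is therefore caused entirely by the vanishing of the initial receiver projection $O_rB$, and not by any genuine amplification in $\mathcal U(T,0)$.
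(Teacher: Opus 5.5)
Your proposal is correct and follows the same argument as the paper's one-line proof: continuity forces $O_rB\to0$ while $O_r\mathcal U(T,0)B$ stays bounded away from zero near $r_0$, so the ratio diverges along any sequence with nonzero denominator. Your explicit reading of ``continuous family'' as continuity of the pairings with the fixed states $B$ and $\mathcal U(T,0)B$, and your remark on the degenerate case where the denominator vanishes identically near $r_0$, are reasonable clarifications the paper leaves implicit.
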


\begin{proof}
Continuity sends the denominator to zero while the numerator remains nonzero
near $r_0$.
\end{proof}

For the fixed $k=5.4$ numerical response comparison, keeping the same active
trajectory and legal parameter direction while varying only the smooth radial
receiver window produces an initial sign change between $r_{\rm win}=1.19$
and $1.20$, with an interpolated zero near
\[
 r_{\rm zero}\approx1.19376,
\]
whereas the final response remains visibly nonzero on the tested discretization.
This is numerical conditioning evidence, not an interval theorem.

The correct continuum response object is therefore
\[
 \boxed{O\,\mathcal U(T,0)B,}
\]
normalized by the physical size of $B$ and with no division by an accidental
initial receiver coordinate.  The material Poisson-bracket source
representation and the deposited/differentiated representation provide two
independent implementations of the same coupled tangent and agree closely on
fine grids; this supports the response calculation but does not replace a
continuum enclosure.

\medskip
\noindent\textbf{Absolute coupled response.}
A remaining response question is to fix one contour-transverse physical
receiver $O$ and one normalized perturbation direction $B_j$ and obtain a
continuum lower bound
\[
 \boxed{|O\mathcal U(T,0)B_j|\ge g_*>0}
\]
with spatial and temporal error control and without division by an initial
receiver coordinate.

\section{Finite-jet admission data}
A robust finite-jet sufficient descendant criterion is available as a separate sufficient criterion, but it is
downstream of one validated fixed-label block. Once such a block is available,
its admission state requires interval information including
\[
D''(0),\quad K'(0),\quad R(0),\quad G(0),\quad G'(0),
\]
together with derivative and remainder envelopes and the required open-flux, provenance, tail, halo, and no-preemption margins.  The corresponding margin
vector may be written
\[
\mathcal M(Y)=(m_D,m_K,m_R,m_{G0},m_{G1},p_1,\ldots,p_N).
\]
For interval-valued margins at a candidate parameter $\lambda_0$, the
fixed in advance active set is
\[
\boxed{\mathcal A=\{i:m_i(\lambda_0)\le0\ \text{or the certified interval for }m_i(\lambda_0)\text{ contains }0\}}.
\]
Only after the complete margin vector is exported may one choose the active
normal(s) and run a transition/rank adjoint against that set.

\medskip
\noindent\textbf{Admission-state validation.}
After the fixed-label block has been interval validated through a declared
handoff time, the complete margin vector and derivative envelopes required by
the finite-jet descendant criterion must be evaluated on that same terminal
state.  Until both the block and these terminal margins are certified, no
finite-step first-entry conclusion is drawn.  The positive transition values
seen on finite grids are numerical diagnostics only.

\section{Local response is not automatic re-entry}
\label{sec:reentry}

The finite-jet descendant lemma provides a short-time response statement from entry
conditions such as $D''(0)<0$, $K'(0)>0$ and $G'(0)>0$.  These conditions are
not automatically inherited at the terminal time.

If the next physical scale is $d_{\rm new}=\lambda d$ and the new normalized
time is $\sigma=(t-t_1)/\sqrt{d_{\rm new}}$, then before any additional
geometric recentering the exact rebase gives
\[
 D_{\rm new}''(0)=D''(\tau),\qquad
 K_{\rm new}'(0)=\lambda K'(\tau),\qquad
 G_{\rm new}'(0)=\lambda G'(\tau),
\]
with the higher derivative envelopes acquiring their corresponding powers of
$\sqrt\lambda$.  Thus a local response lemma becomes an iterative descendant criterion
only after the prospective next-entry margins are proved on the actual
terminal state.

\begin{remark}
Even infinitely many strict contractions do not by themselves imply vanishing
scale unless the product of the contraction factors tends to zero, and a
singularity construction additionally needs finite physical accumulation time
and an appropriate vorticity/BKM witness on one unchanged solution.
\end{remark}

\section{Certified short-time response and local error bootstrap}
Let $s=1-e^{-\tau}$ be natural time.  For the specified datum, the fixed-label comparison is rigorously enclosed on the time interval needed for the material estimates, while the four retained receiver departure signs are presently certified only through
\[
0<s\le1.6\times10^{-5}.
\]
The absolute shield is certified positive on
\[
4\times10^{-7}\le s\le1.6\times10^{-5}.
\]
The initial natural-time receiver curvatures satisfy
\[
139.6<A_{ss}(0)<180.6,
\qquad
-131.6<\Sigma_{ss}(0)<-101.5.
\]
These initial coefficients alone do not extend the finite-time sign certificate to $s\simeq10^{-3}$.

The two retained receiver functionals depend only on the sine-$m=2$ output component:
\[
L_A f=\frac12\int_0^\infty f_2(r)\frac{dr}{r},
\qquad
L_R f=-\frac14\int_0^R\left(r^{-1}-\frac{r^3}{R^4}\right)f_2(r)\,dr.
\]
This is an output filter, not an invariant nonlinear $m=2$ subsystem; velocity/current and scalar modes outside $m=2$ can feed the observed component.

The centered complete second current derivative has an exact cancellation: writing the scalar as a constant plateau plus the compact active part, the plateau contribution is a spatially constant source velocity on the relevant initial support and disappears after center subtraction.  At the next order, the plateau-dependent change of the strain third derivative at the frozen amplitude is interval enclosed by
\[
35853.877184
< A^{(3)}_{\rm full}(0)-A^{(3)}_{\rm active}(0)
<36215.960858.
\]
This signed coefficient is not, by itself, a finite-time remainder bound.

A relative-energy comparison for a divergence-free reference gives
\[
E(s)\le1.970012\left(E(0)+\int_0^sD(t)\,dt\right),
\]
where the reference starts from the exact datum and hence $E(0)=0$.  Under the spatial defect bound used below this yields
\[
\|e\|_{L^2(\mathbb R^2)}\le E_*:=0.003938054.
\]
The following local estimate removes four independent velocity hypotheses from the terminal strain comparison.

\medskip
\noindent\textbf{Local Hodge--Bergman bootstrap.}
\begin{theorem}\label{thm:local-hodge-bootstrap}
Let $e$ be a divergence-free planar velocity error, $\zeta=\operatorname{curl}e$, and assume the reflection symmetry of the specified problem so that $\zeta(x_1,0)=0$.  If
\[
[\zeta]_{C^{1/2}(B_{0.6})}\le0.005,
\qquad
\|e\|_{L^2(\mathbb R^2)}\le0.003938054,
\]
then on $B_{0.5}$,
\[
\|e\|_\infty<0.01884,
\qquad
[e]_{C^{1/2}}<0.08943,
\]
\[
\|De\|_\infty<0.17474,
\qquad
[De]_{C^{1/2}}<1.591.
\]
\end{theorem}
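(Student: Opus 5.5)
The plan is to write $e$ on $B_{0.6}$ as a harmonic part plus a local Biot--Savart part, estimate the local part from the Hölder seminorm of $\zeta$, and estimate the harmonic part from the $L^2$ bound by Bergman-space (mean-value) inequalities. Concretely, fix a smooth cutoff $\chi$ with $\chi=1$ on $B_{0.55}$ and support in $B_{0.6}$, and set
\[
e_{\rm loc}=\nabla^\perp\Delta^{-1}(\chi\zeta)=K*(\chi\zeta),\qquad h=e-e_{\rm loc}.
\]
On $B_{0.55}$ the field $h$ is divergence-free and curl-free, so $h_1-ih_2$ is holomorphic there. The whole argument therefore reduces to bounding the two pieces separately and adding the constants.

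\emph{Step 1 (pointwise control of $\zeta$).} The reflection symmetry gives $\zeta(x_1,0)=0$. Every point of $B_{0.6}$ lies within distance $0.6$ of the axis along a vertical segment inside the ball, so
\[
|\zeta(x)|\le 0.005\,|x_2|^{1/2}\le 0.005\sqrt{0.6}.
\]
Hence $\chi\zeta$ has an explicit $L^\infty$ bound and an explicit $C^{1/2}$ bound; the latter costs a $\|\nabla\chi\|_\infty$ term times the sup bound.

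\emph{Step 2 (local part).} Use the exact-kernel decomposition of Section~\ref{sec:exact-kernel}:
\[
De_{\rm loc}=\tfrac12(\chi\zeta)R_{\rm cw}+\operatorname{p.v.}\!\int DK_{\rm cl}(x-y)\,(\chi\zeta)(y)\,dy.
\]
Bound the principal-value part by subtracting $(\chi\zeta)(x)$ and using the mean-zero property of $DK_{\rm cl}$ on circles. This gives
\[
|De_{\rm loc}(x)|\lesssim [\chi\zeta]_{1/2}\int_{|y|<1.2}\frac{|y|^{1/2}}{\pi|y|^2}\,dy
\]
plus a boundary term, and every constant is explicit. The sup bound on $e_{\rm loc}$ is $\frac1{2\pi}\int|\chi\zeta|/|x-y|$. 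The bound on $[e_{\rm loc}]_{C^{1/2}}$ follows by interpolating these two. The bound on $[De_{\rm loc}]_{C^{1/2}}$ is the standard Schauder (Korn) estimate for the Calderón--Zygmund kernel. Here I would use the sharp $\|D^2K\|=1/(\pi|x|^3)$ together with the usual near/far split at radius $|x-x'|$, so that the constant is explicit rather than abstract.

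\emph{Step 3 (harmonic part, the Bergman step).} For $x\in B_{0.5}$ I would apply the mean-value property on the ball $B_{0.05}(x)\subset B_{0.55}$, or on larger balls where admissible, to get
\[
|h(x)|\le\frac{\|h\|_{L^2(B_{0.55})}}{\sqrt{\pi}\,r},\qquad \|h\|_{L^2}\le\|e\|_{L^2}+\|e_{\rm loc}\|_{L^2(B_{0.55})}.
\]
Derivatives of $h$ follow from Cauchy estimates in the same form, with one factor $1/r$ per derivative. The Hölder seminorms on $B_{0.5}$ are bounded by the next derivative's sup via the mean value theorem, since $B_{0.5}$ is convex. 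Summing the local and harmonic contributions should produce the four stated constants.

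\emph{Main obstacle.} The difficulty is numerical sharpness. With a margin of only $0.05$ between $B_{0.5}$ and $B_{0.55}$, the Bergman bounds $r^{-1}$ and $r^{-2}$ amplify $E_*$ by large factors, and the stated $\|e\|_\infty<0.01884$ is only about $5E_*$. So I would not use a crude cutoff. Instead I would choose the cutoff radius and the Bergman radius, and possibly take $\chi=\mathbf 1_{B_{0.6}}$ with the exact Biot--Savart formula on the disc, to optimize the constants. I would then verify the resulting closed-form integrals with interval arithmetic, as the paper does for the other certified constants.
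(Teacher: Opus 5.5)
Your decomposition has the right shape, and Step 1 is exactly what the paper does. However, the harmonic-part estimates you propose cannot produce the stated constants, and in this theorem the constants are the whole content. They are essentially the exact sums of the paper's component bounds: $0.006715+0.012119=0.018834$, $0.030243+0.14449=0.174733$, $0.059179+0.030243=0.089422$, $0.96345+0.62715=1.5906$. This leaves almost no slack.

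The harmonic entries are $E_*$ times the sharp reproducing-kernel constants of the \emph{whole} disk $B_{0.6}$ evaluated at $|z|=0.5$. With $f=h_1-ih_2$ and $R=0.6$,
\[
|f(z)|\le\frac{R\,\|f\|_{L^2(B_R)}}{\sqrt\pi\,(R^2-|z|^2)},\qquad
|f'(z)|\le\frac{\sqrt{2R^2(R^2+2|z|^2)/\pi}}{(R^2-|z|^2)^2}\,\|f\|_{L^2(B_R)},
\]
which gives the factors $3.077$ and $36.69$. Your Step 3 instead uses the mean-value/Cauchy inequality on an inscribed disk. That is the Bergman bound of the small disk at its centre, and it is weaker even at the largest admissible radius $0.1$: by the factor $(R+|z|)/R=11/6$ for values and about $2.2$ for first derivatives. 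It gives $|h|\le 5.64\,E_*\approx0.0222>0.01884$ and $|Dh|\le\sqrt{2/\pi}\,10^{2}E_*\approx0.314>0.17474$, before any local contribution is added. At your radius $0.05$ the numbers are about $0.044$ and $1.26$.

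Your Hölder step has the same problem. On $B_{0.5}$, whose diameter is $1$, the mean value theorem only gives $[h]_{1/2}\le\|Dh\|_\infty$ and $[Dh]_{1/2}\le\|D^2h\|_\infty$. These exceed $0.08943$ and $1.591$ even with the sharp Bergman values $0.14449$ and about $3.21$. What you need is the interpolation $[f]_{1/2}\le\sqrt{2\|f\|_\infty\|Df\|_\infty}$, which reproduces $0.059179$ and $0.96345$.

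The second missing ingredient is orthogonality. Your $e_{\rm loc}=K*(\chi\zeta)$ is not $L^2(B_{0.6})$-orthogonal to harmonic fields, so you only get $\|h\|_2\le E_*+\|e_{\rm loc}\|_{L^2}$. The bound on the extra term available from $|\zeta|\le0.005\sqrt{|x_2|}$ is of order $10^{-3}$; the pointwise Biot--Savart bound at the origin alone is about $1.2\times10^{-3}$. Multiplying by $36.69$ already exceeds the margin $0.17474-0.14449=0.03025$, which must also absorb $\|De_{\rm loc}\|_\infty$. A smooth cutoff makes things worse: $h$ is then harmonic only on $B_{0.55}$, and $[\chi\zeta]_{1/2}$ picks up $\|\nabla\chi\|_\infty\ge20$.

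The paper avoids both losses by solving $\Delta\psi=\zeta$ in $B_{0.6}$ with $\psi=0$ on $\partial B_{0.6}$. Integrating by parts gives $\int_{B_{0.6}}\nabla^\perp\psi\cdot h=-\int_{B_{0.6}}\psi\operatorname{curl}h=0$. Hence $\|h\|_{L^2(B_{0.6})}\le\|e\|_{L^2}\le E_*$ with no loss, and $h$ is harmonic on all of $B_{0.6}$, which is what makes the full-disk Bergman kernel available. The price is that the rotational part must be bounded with the Dirichlet Green function and its image point. This yields $0.006715$, $0.030243$, and $0.62715$ via a near/far split at distance $0.04$. Your Step 2 machinery adapts to that kernel, but Step 3 has to be rebuilt as above.
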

\begin{proof}
Solve $\Delta\psi=\zeta$ in $B_{0.6}$ with zero boundary data and write
$e=\nabla^\perp\psi+h$, where $h$ is divergence free and curl free.  Orthogonality gives $\|h\|_2\le\|e\|_2$.  Identifying $h_1-ih_2$ with a holomorphic function, the Bergman kernel of the disk bounds $h$, $Dh$, and their $C^{1/2}$ seminorms on $B_{0.5}$ by explicit multiples of $E_*$.  For the Dirichlet rotational part, symmetry gives
$\|\zeta\|_{L^\infty(B_{0.6})}\le0.005\sqrt{0.6}$.  The disk Green function and its image point then give, after subtracting the constant vorticity in the Hessian singularity,
\[
\|\nabla^\perp\psi\|_\infty<0.006715,
\quad
\|D\nabla^\perp\psi\|_\infty<0.030243,
\]
and a near/far split at distance $0.04$ gives
\[
[D\nabla^\perp\psi]_{C^{1/2}}<0.62715.
\]
The corresponding Bergman bounds for $h$ are
\[
\|h\|_\infty<0.012119,
\quad
[h]_{C^{1/2}}<0.059179,
\]
\[
\|Dh\|_\infty<0.14449,
\quad
[Dh]_{C^{1/2}}<0.96345.
\]
Adding the two components gives the four displayed estimates.
\end{proof}

In the terminal vorticity comparison these four bounds imply
\[
[\zeta]_{C^{1/2}(B_{0.6})}<0.002081360<0.005
\]
once the filtered reference satisfies its stated local defect bounds.  A maximal-interval continuity argument therefore closes the local velocity--vorticity coupling.  The remaining proof input on this route is a continuum spatial enclosure of the filtered reference defect, not an additional pointwise velocity estimate.

\section{Continuum reference certification and outgoing geometry}
\label{sec:continuum-certification}
This section records the strongest finite-time statements presently available for the unchanged reference.  Four logical levels are kept separate: continuum bounds for the finite reference, bounds for the reference velocity, conditional comparison with the exact planar Euler solution, and the still-unproved admission and cylindrical steps.

\subsection{Continuum derivative bounds for the finite reference}
Let $\bar\omega$ and $\bar\theta$ be the cubic-in-time Fourier reference on the first normalized block, with
\[
T=1-e^{-1/1000}.
\]
The reference is represented by four Bernstein coefficient fields.  Spatial derivatives through order $13$ are evaluated on a $4096\times4096$ grid by radix-two Fourier summation with outward interval arithmetic.  Each grid value, each trigonometric twiddle factor, and each accumulated sum is enclosed.  The distance from an arbitrary point of a target ball to a selected grid node is at most
\[
\delta_g=\frac{40}{4096\sqrt2}.
\]
The next derivative order controls the Taylor remainder between the node and the continuum point.  Because the cubic Bernstein weights are nonnegative and sum to one, bounds proved for all four coefficient fields hold uniformly for $0\le s\le T$.

The following statement is purely a theorem about the finite reference; it does not use a bound on the unknown Euler error.

\medskip
\noindent\textbf{Certified local derivative bounds.}
\begin{theorem}\label{thm:reference-derivatives}
For every $0\le s\le T$ the finite reference satisfies
\[
\begin{array}{c|c|c|c}
\text{region}&\text{quantity}&\text{certified bound}&\text{comparison cap}\\ \hline
B_{0.6}&|\nabla\bar\omega|&2.798&4\\
B_{0.6}&|D^2\bar\omega|_F&6.874&16\\
B_{0.6}&|\nabla\bar\theta|&44.027&64\\
B_{0.6}&|D^2\bar\theta|_F&19.144&64\\
B_{0.6}&|D^3\bar\theta|_F&128.642&1024\\
B_{0.9}&|\nabla\bar\omega|&4.475&8\\
B_{0.9}&|D^2\bar\omega|_F&11.549&32\\
B_{0.9}&|\nabla\bar\theta|&49.105&64\\
B_{0.9}&|D^2\bar\theta|_F&35.332&64\\
B_{0.9}&|D^3\bar\theta|_F&1219.173&2048\\
B_1&|\nabla\bar\omega|&5.393&8.
\end{array}
\]
The underlying certificate contains forty-four componentwise inequalities; the table lists the grouped maxima that dominate them.
\end{theorem}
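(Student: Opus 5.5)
The bounds are proved by a computer-assisted argument that reduces every continuum statement to finitely many interval inequalities at grid nodes, closed by a Taylor remainder. First reduce time to the coefficients. Write the reference as $\bar\omega(s,x)=\sum_{k=0}^3 b_k(s/T)\,\bar\omega_k(x)$, with nonnegative Bernstein weights $b_k$ summing to one, and similarly for $\bar\theta$. Each quantity in the table is a convex function of the jet, namely a Euclidean or Frobenius norm of a spatial derivative. Hence $|D^j\bar\omega(s,x)|\le\max_k|D^j\bar\omega_k(x)|$. It therefore suffices to bound the four coefficient fields, each a finite trigonometric polynomial, separately, uniformly over the three balls $B_{0.6},B_{0.9},B_1$.

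Next obtain node enclosures. For each coefficient field and each multi-index $|\alpha|\le 13$, evaluate $\partial^\alpha$ at all $4096^2$ nodes by a radix-two inverse FFT. The Fourier coefficients are multiplied by $(ik)^\alpha$ in outward-rounded interval arithmetic, with enclosed twiddle factors, and every butterfly sum is rounded outward. This yields rigorous intervals for $\partial^\alpha\bar\omega_k$ and $\partial^\alpha\bar\theta_k$ at every node. In parallel, compute a global sup bound for each derivative order $j+1$ via $\|\partial^\alpha f\|_\infty\le\sum_\xi|\hat f(\xi)||\xi^\alpha|$. Alternatively, apply the same node-plus-remainder scheme one order higher, which is why order $13$ is available: it caps the chain.

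Then pass from nodes to the continuum. For $x$ in a target ball, choose a node $x_g$ with $|x-x_g|\le\delta_g$. A Taylor expansion of $D^jf$ to finite order $p$ gives
\[
|D^jf(x)|\le\sum_{i=0}^{p-1}\frac{\delta_g^i}{i!}\,|D^{j+i}f(x_g)|+\frac{\delta_g^{p}}{p!}\,\sup|D^{j+p}f|.
\]
The node terms are enclosed by the previous step. The remainder uses the higher-order sup bound over the $\delta_g$-neighbourhood of the ball, which is still within the periodic box. Taking the maximum over nodes within $\delta_g$ of each ball, and combining the components into Frobenius norms in interval arithmetic, produces the forty-four componentwise inequalities. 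The grouped maxima are the listed values. Comparison with the caps is then trivial.

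The main obstacle is keeping the Taylor remainder small. The crude Fourier $\ell^1$ bound on high derivatives grows factorially with the order, and the gradient of $\bar\theta$ is already near $50$. I would first try a moderate $p$ (around $4$–$6$) with $\delta_g\approx 0.0069$, so that $\delta_g^p/p!$ overwhelms the growth of the high-order bound. If the $\ell^1$ bound is too lossy, the fallback is to bound $D^{j+p}$ itself on a slightly enlarged ball by recursively applying the same node-plus-remainder step, terminating at order $13$ with the global $\ell^1$ bound.
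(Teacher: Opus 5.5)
Your proposal is correct and follows the paper's argument. Bernstein convexity reduces the time-uniform bound to the four coefficient fields, outward-rounded radix-two Fourier summation encloses the derivatives at the $4096\times4096$ nodes, and a Taylor remainder controlled by higher derivative orders carries the nodal bounds to every point within $\delta_g$. The paper uses only the first-order step $|D^\gamma f(x)|\le|D^\gamma f(x_g)|+\delta_g\sup_{[x_g,x]}|D^{\gamma+1}f|$ (with the order-$13$ computation supplying the next derivative) rather than your order-$p$ expansion, and it leaves implicit the terminal bound on the top derivative order that you make explicit through the Fourier $\ell^1$ estimate or the recursive chain.
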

\begin{proof}
Write one Bernstein coefficient field as a finite Fourier sum
\[
f(x,y)=\sum_{(k,l)\in\Lambda} c_{k,l}e^{i\alpha(kx+ly)},
\]
where the coefficients $c_{k,l}$ are stored dyadic numbers.  For a multi-index $\gamma$,
\[
D^\gamma f(x,y)=\sum_{(k,l)\in\Lambda}
(i\alpha k)^{\gamma_1}(i\alpha l)^{\gamma_2}c_{k,l}e^{i\alpha(kx+ly)}.
\]
Thus every derivative is again a finite Fourier sum with exact algebraic frequency multipliers.  The numerical proof converts the dyadic coefficients exactly, encloses the trigonometric factors, and performs all additions with outward rounding.  Hence the interval at a grid node contains the exact value of the corresponding derivative of the finite reference.

Let $x_g$ be a certified grid node with $|x-x_g|\le\delta_g$.  Taylor's formula in integral form gives
\[
D^\gamma f(x)=D^\gamma f(x_g)
+\int_0^1D^{\gamma+1}f(x_g+t(x-x_g))[x-x_g]\,dt.
\]
The computation includes the derivative order needed to bound the integrand.  Consequently
\[
|D^\gamma f(x)|
\le |D^\gamma f(x_g)|+\delta_g
\sup_{[x_g,x]}|D^{\gamma+1}f|.
\]
Applying this componentwise and using the Euclidean or Frobenius norm gives the continuum spatial bounds in the table.  Finally, if
\[
\bar f(s)=\sum_{j=0}^3B_j(s)f_j,
\qquad B_j(s)\ge0,\qquad\sum_jB_j(s)=1,
\]
then convexity of the norm yields
\[
|D^\gamma\bar f(s)|\le\max_j|D^\gamma f_j|.
\]
This proves uniformity in time and completes the argument.
\end{proof}

The global assembled defect norms are not part of this theorem.  Their fine-grid values provide numerical evidence, but their cutoff and whole-plane convolution errors still require outward continuum enclosure.

\subsection{Reference compression near the material center}
Set
\[
\bar F=-\partial_x\bar u_x.
\]
For the unmasked whole-plane Fourier recovery, the multiplier of a nonzero frequency $(k,l)$ is
\[
m(k,l)=-\frac{kl}{k^2+l^2}.
\]
The actual reference uses a radial mask which equals one through radius $R_0=8.2$ and vanishes by radius $9$.  For a sharp radial cutoff at radius $a$, angular integration gives the center correction factor
\[
1-\frac{2J_1(\alpha a\sqrt{k^2+l^2})}
{\alpha a\sqrt{k^2+l^2}}.
\]
Since $|J_1(z)|\le1$, integration of sharp cutoffs against the nonnegative measure $-\chi'(a)\,da$ yields a rigorous bound for the smooth mask.

\medskip
\noindent\textbf{Reference compression bound.}
\begin{proposition}\label{prop:reference-compression}
For every $0\le s\le T$,
\[
\bar F(s,0,0)>1.0800582512.
\]
Furthermore,
\[
|\partial_x\bar F(s,x,0)|<235.006428
\qquad (|x|\le0.002).
\]
\end{proposition}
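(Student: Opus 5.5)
The plan is to prove both bounds as statements about the finite Fourier reference, in the same style as Theorem~\ref{thm:reference-derivatives}. The only new ingredient is the mask correction. In each of the four Bernstein coefficient fields $f_j$, the whole-plane quantity $-\partial_x u_x$ at the origin is a finite sum over nonzero frequencies:
\[
\bar F_j^{\rm unmasked}(0,0)=\sum_{(k,l)\neq 0} m(k,l)\,\hat\omega_j(k,l),
\qquad m(k,l)=-\frac{kl}{k^2+l^2}.
\]
The masked recovery differs from this by the Bessel correction factor described above. I would therefore split
\[
\bar F_j(0,0)=\Sigma_j^{\rm main}-\Sigma_j^{\rm mask},
\]
where $\Sigma_j^{\rm main}$ is the unmasked multiplier sum. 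The term $\Sigma_j^{\rm mask}$ collects the cutoff contribution.
\[
\Sigma_j^{\rm mask}=\int_{R_0}^{9}\Bigl(-\chi'(a)\Bigr)\sum_{(k,l)}m(k,l)\hat\omega_j(k,l)\,\frac{2J_1(\alpha a|(k,l)|)}{\alpha a|(k,l)|}\,da .
\]

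First, I would evaluate $\Sigma_j^{\rm main}$ from the stored dyadic coefficients with outward interval arithmetic. The multipliers are exact rationals, and at the origin every exponential equals one, so no twiddle factors enter.

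Second, I would bound $|\Sigma_j^{\rm mask}|$. Since $-\chi'\ge0$ has total mass one and $|m|\le 1/2$, a crude estimate would use $|J_1|\le1$. A sharper one would use the decay $|J_1(z)|\le \sqrt{2/(\pi z)}$, or simply $|2J_1(z)/z|\le 2/z$, which is valid once $z=\alpha a|(k,l)|\ge\alpha R_0$ is large. This gives
\[
|\Sigma_j^{\rm mask}|\le \sum |\hat\omega_j(k,l)|\,\frac{1}{\alpha R_0 |(k,l)|}.
\]
Third, the lower bound holds for every $s$ by the Bernstein step. $\bar F(s,0,0)$ is the convex combination $\sum_j B_j(s)\bar F_j(0,0)$, so it suffices to show $\min_j$ of the certified lower endpoints exceeds $1.0800582512$.

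For the derivative bound, I would compute $\partial_x\bar F$ in the same way. It is a Fourier sum with multipliers $i\alpha k\,m(k,l)$ times the masked factor. At grid nodes I would take the enclosure from the certified radix-two evaluation of Theorem~\ref{thm:reference-derivatives}. On $|x|\le 0.002$ I would add the Taylor remainder $0.002\cdot\sup|\partial_x^2\bar F|$, with the supremum bounded crudely by $\sum \alpha^2k^2|m||\hat\omega_j|$. Convexity in time then finishes this bound as before.

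The main obstacle is making the mask term rigorous and small enough. The naive bound $|J_1|\le1$ gives a correction of size $\sum|\hat\omega_j|$, which could swamp the margin above $1.08$. I would first try the $2/z$ decay together with splitting frequencies into a low set and a high set. On the low set I would enclose $J_1$ directly by interval Taylor series and integrate against the explicit $-\chi'$. On the high set I would use the $2/z$ tail estimate.
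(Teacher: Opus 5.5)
For the center value, your plan is the paper's argument. You enclose the unmasked multiplier sum with $m(k,l)=-kl/(k^2+l^2)$, control the smooth mask through the averaged factor $\int(-\chi'(a))\,2J_1(\alpha a\kappa)/(\alpha a\kappa)\,da$ with $a\ge R_0$, and finish by Bernstein convexity. The paper's four lower bounds are $2$, $1.6933\ldots$, $1.3866\ldots$, $1.0800582512$.

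The obstacle you describe is misidentified, though. $|J_1|\le1$ is exactly what gives $|2J_1(z)/z|\le 2/z$, and hence your $1/(\alpha R_0\kappa)$ estimate. That is the bound the paper uses. The estimate costing $\sum|\hat\omega_j|$ comes from the trivial bound $|2J_1(z)/z|\le1$. Also drop $|J_1(z)|\le\sqrt{2/(\pi z)}$: it holds for $|\nu|\le\frac12$ but fails for $J_1$. At the extremum $z\approx5.3314$ one has $|J_1|\approx0.34614>0.34556\approx\sqrt{2/(\pi z)}$.

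For $\partial_x\bar F$ your route differs from the paper's. The paper does not keep the mask on the Fourier side. It encloses the finite Fourier derivative with the exact multiplier $i\alpha k\,m(k,l)$, which is the same algebraic type of computation as Theorem~\ref{thm:reference-derivatives}. It then treats the exterior part in physical space: that part lies at distance at least $R_0-\rho$ from every observation point with $|x|\le\rho$. The mean-value theorem and $|\nabla H(z)|=1/(\pi|z|^3)$ then bound its contribution by the absolute exterior mass times $1/(\pi(R_0-\rho)^3)$.

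On the Fourier side you get no comparable gain. For the derivative, the factor $\alpha k$ cancels the $1/(\alpha R_0\kappa)$ Bessel decay, so the $|J_1|\le1$ bound on the mask correction is about $0.77\,R_0^{-1}\sum|\hat\omega_j(k,l)|$, with no frequency decay at all. Nodal values with the exact masked multiplier would also need rigorous Bessel quadrature against $-\chi'$, which Theorem~\ref{thm:reference-derivatives} does not supply, since it certifies derivatives of $\bar\omega$ and $\bar\theta$ only.

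Your approach is rigorous in principle and uses one pipeline for both assertions. It is not, however, how $235.006428$ is obtained, and nothing guarantees it lands below that constant.

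Finally, your crude bound $\sum\alpha^2k^2|m||\hat\omega_j|$ for $\sup|\partial_x^2\bar F|$ omits the mask factor $1-\int(-\chi')\,2J_1/z\,da$. That factor exceeds $1$, up to about $1.13$, at low frequencies where $2J_1(z)/z<0$. Multiply the bound by, e.g., $1+2/(\alpha R_0\kappa)$.
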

\begin{proof}
For the three nonconstant Bernstein coefficient fields, the interval-enclosed center values are bounded below by
\[
1.6933139188,\qquad1.3866560195,\qquad1.0800582512,
\]
and the constant coefficient gives $2$.  The first assertion follows from Bernstein convexity.

For the derivative estimate, use the whole-plane kernel
\[
H(x,y)=\frac{xy}{\pi(x^2+y^2)^2}.
\]
A direct differentiation gives
\[
|\nabla H(z)|=\frac1{\pi|z|^3}.
\]
On $|x|\le\rho<R_0$, the exterior part is separated from the observation point by at least $R_0-\rho$.  The mean-value theorem therefore bounds its derivative by the absolute exterior coefficient mass times the supremum of $|\nabla H|$ on that separated region.  Adding this explicit exterior allowance to the interval-enclosed finite Fourier derivative gives the displayed number at $\rho=0.002$.
\end{proof}

\subsection{Conditional contraction of two fixed material labels}
We now pass from the finite reference to the exact planar Euler solution.  This step is conditional on the remaining global defect and local comparison inequalities of the finite-time validation theorem.  In particular, the seven assembled global defect pairs must satisfy their continuum bounds, and the remaining center and reference-velocity rows must be enclosed.  Under these hypotheses the nested comparison theorem gives
\[
|c(s)-\bar c(s)|<d_*=1.707987555750\times10^{-5}
\]
and
\[
\|D(u-\bar u)\|_{L^\infty}<0.634219044707
\]
on the required moving ball.

Take the material particles issued from the fixed labels
\[
a=(0,0),\qquad b=(10^{-3},0).
\]
Reflection symmetry keeps both trajectories on the horizontal symmetry line.

\medskip
\noindent\textbf{Material contraction for the selected labels.}
\begin{theorem}\label{thm:label-contraction}
Under the comparison hypotheses above, if $d(s)$ denotes the distance between the two selected material particles, then
\[
0<d(s)\le10^{-3}e^{-s/5},\qquad0\le s\le T,
\]
and consequently
\[
\frac{d(T)}{d(0)}<0.99980012.
\]
\end{theorem}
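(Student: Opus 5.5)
Both particles stay on the symmetry line $\{x_2=0\}$, so I will track only their first coordinates.

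Let $X_a(s)<X_b(s)$ be those coordinates, with $d(s)=X_b(s)-X_a(s)$. Distinct labels stay distinct because the flow map is a diffeomorphism, and the ordering on the line is preserved. Hence $d(s)>0$ for all $s$. Since the velocity of the centered block is $v=u-u(c)$, the common translation cancels in the difference, and the mean value theorem along the segment gives
\[
 \dot d(s)=u_1(X_b,0)-u_1(X_a,0)=-d(s)\int_0^1 F(s,X_a+t\,d,0)\,dt,
 \qquad F=-\partial_xu_x .
\]
So it suffices to show $F\ge 1/5$ on the segment joining the two particles. Grönwall then gives $d(s)\le 10^{-3}e^{-s/5}$.

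To bound $F$ from below, split $F=\bar F+(F-\bar F)$. The comparison hypotheses give $|F-\bar F|\le\|D(u-\bar u)\|_{L^\infty}<0.634219044707$ on the moving ball. It remains to bound $\bar F$ below at the actual particle positions, which is where \cref{prop:reference-compression} enters. Run a continuity (maximal interval) argument. Let $s_1\le T$ be maximal such that, on $[0,s_1]$, both particles satisfy $|X|\le 0.002$ in the reference-centered frame, up to the center offset $d_*$.

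\begin{itemize}[nosep]
\item The label $a$ starts at the center. Its distance from the reference center $\bar c$ is controlled by $|c-\bar c|<d_*$ together with the displacement of $a$ relative to $c$, which is small over the short time $T\approx10^{-3}$.
\item The label $b$ lies within $d(s)\le10^{-3}$ of $a$.
\item On this set \cref{prop:reference-compression} gives
\[
 \bar F\ge 1.0800582512-235.006428\cdot(0.002)\approx 0.610,
\]
and hence
\[
 F>0.610-0.634 .
\]
\end{itemize}

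This last bound is negative, so the crude estimate fails. This is the main obstacle. I would first try to use the Lipschitz bound only over the actual radius $\rho(s)\le d_*+10^{-3}+O(T)$ rather than $0.002$. That gives $\bar F\ge 1.0800582512-235.006428\cdot 0.00102\approx0.8403$, and then $F>0.8403-0.6342=0.206>1/5$.

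With $F>1/5$ on $[0,s_1]$, the distance contracts, so $b$ stays within $10^{-3}$ of $a$. The drift of $a$ relative to the center is bounded by $|v(X_a)|\le\|Dv\|\,|X_a|$, which starts at zero and by Grönwall stays of size $O(d_*)$. The strict inequalities then reproduce themselves at $s_1$, which forces $s_1=T$.

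The delicate point is exactly this radius budget: the margin is thin ($0.206$ against $0.2$). If it does not close, I would use the fact that $a$ is the symmetric center label, so $X_a$ tracks $c$ exactly up to $d_*$, and shrink $\rho$ accordingly.

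Finally,
\[
 \frac{d(T)}{d(0)}\le e^{-T/5},\qquad T=1-e^{-1/1000}\approx 9.995\times10^{-4},
\]
so $e^{-T/5}\approx 1-1.999\times10^{-4}<0.99980012$. The strict inequality follows because the pointwise bound $F>1/5$ is strict.
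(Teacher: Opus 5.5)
Your skeleton matches the paper's proof: the symmetry line, injectivity for $d>0$, the mean-value identity $\dot d=-d\int_0^1F$, the split $F=\bar F+(F-\bar F)$ with the center value and Lipschitz constant $235.006428$ from the reference-compression proposition, then Gr\"onwall and a first-exit argument. However, the argument closes only through what you list as a fallback, and that fact is the load-bearing step of the paper's proof. The label $a=(0,0)$ \emph{is} the material center, so $X_a(s)=c(s)$ identically and $|X_a(s)-\bar c(s)|<d_*$. Hence every point of the segment lies within $\rho\le d_*+10^{-3}$ of the reference center, and $F>0.2068188987$.

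There is no $O(T)$ drift to budget. Your own Gr\"onwall remark, started from zero offset, gives an offset that is identically zero, not ``of size $O(d_*)$''. An unquantified $O(T)$ term could not be carried anyway: $F\ge1/5$ needs $\rho\le0.245839206/235.006428\approx1.0461\times10^{-3}$. That leaves only about $2.9\times10^{-5}$ of room beyond $d_*+10^{-3}$, and plugging in $0.00102$ silently assumes the extra term is below $3\times10^{-6}$.

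Your continuity argument is also run on the wrong quantity. Localization in $|X|\le0.002$ gives only $F>-0.024$, as you computed, so it yields no contraction. You then use $d\le10^{-3}$ to obtain $F>1/5$, and $F>1/5$ to obtain $d\le10^{-3}$. This is circular unless a bound on $d$ is itself the bootstrap hypothesis, for example $d(\sigma)\le10^{-3}$ on $[0,s]$, or anything below about $1.029\times10^{-3}$. That is what the paper means by ``provided the estimate holds up to the current time.''

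Finally, $e^{-T/5}\approx0.999800119945$ lies below $0.99980012$ by only about $5.5\times10^{-11}$. Your first-order value $1-1.999\times10^{-4}$ underestimates $e^{-T/5}$ by about $2\times10^{-8}$, so it does not certify the ratio. Use $e^{-x}\le1-x+x^2/2$ with $x=T/5$, or integrate the stronger rate $0.2068$, which leaves ample margin.
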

\begin{proof}
Every point of the segment joining the two particles lies within reference-coordinate distance at most $d_*+10^{-3}<0.002$ from the reference center, provided the estimate holds up to the current time.  Proposition~\ref{prop:reference-compression} and the exact-solution gradient error therefore imply
\[
\begin{aligned}
F_{\rm exact}
&\ge1.0800582512
-235.006428(d_*+10^{-3})-0.634219044707\\
&>0.2068188987>\frac15.
\end{aligned}
\]
On the symmetry line $F_{\rm exact}=-\partial_xu_x$.  The mean-value theorem along the material segment gives
\[
d'(s)=u_x(X_b(s),s)-u_x(X_a(s),s)
\le-\frac15d(s).
\]
Classical flow injectivity gives $d(s)>0$.  Gronwall's inequality yields
\[
d(s)\le d(0)e^{-s/5}.
\]
This estimate shortens rather than enlarges the segment, so the assumed localization is preserved by a standard first-exit argument.  Substitution of the stated terminal time gives the numerical ratio.
\end{proof}

The two labels have transported scalar contrast $1/25$.  Since $D_t\theta=0$,
\[
|\theta(X_b(T),T)-\theta(X_a(T),T)|=\frac1{25}.
\]
The one-dimensional mean-value theorem and Theorem~\ref{thm:label-contraction} therefore give
\[
\sup_{[X_a(T),X_b(T)]}|\theta_x(T)|
\ge40e^{T/5}>40.0079968.
\]
This is a first-block amplification estimate only.

\subsection{A certified terminal scalar trace}
Inside the analytic core the initial transported scalar is
\[
\theta_0(x,y)=2048-20x(1+\cosh y)+\cosh y-1.
\]
On the terminal symmetry line the finite reference can be written exactly as
\[
\bar\theta(T,0,y)-\bar\theta(T,0,0)
=\cosh y-1+
\sum_{l=0}^{639}q_l
\left[\cos\left(\frac{\pi ly}{20}\right)-1\right].
\]
The finite coefficient sums satisfy
\[
\sum_l|q_l|<0.234703,\qquad
\sum_l\frac{\pi l}{20}|q_l|<3.573,
\]
and a separate interval sum gives
\[
\sup_y|\partial_x(\bar\theta-\theta_0)(T,0,y)|<6.727.
\]

\medskip
\noindent\textbf{Terminal continuum trace.}
\begin{theorem}\label{thm:terminal-trace}
For the unchanged finite reference,
\[
0.0992947015<\bar\theta(T,0,y)-\bar\theta(T,0,0)
<0.1322397061
\]
whenever $0.45\le|y|\le0.5$.
\end{theorem}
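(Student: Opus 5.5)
The plan is to use the exact finite representation of the terminal trace displayed just before the statement,
\[
\bar\theta(T,0,y)-\bar\theta(T,0,0)=\cosh y-1+\sum_{l=0}^{639}q_l\Big[\cos\big(\tfrac{\pi l y}{20}\big)-1\Big],
\]
and to enclose the finite cosine sum on the window $0.45\le|y|\le0.5$. Both terms are even in $y$, so it is enough to treat $y\in[0.45,0.5]$. The leading term is explicit and increasing there:
\[
\cosh(0.45)-1\approx0.10297,\qquad \cosh(0.5)-1\approx0.12763.
\]
So the task reduces to showing that the correction $S(y)=\sum_l q_l[\cos(\pi ly/20)-1]$ shifts these endpoints by at most a few thousandths. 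Specifically, we need $S(y)>0.0992947015-(\cosh y-1)$ and $S(y)<0.1322397061-(\cosh y-1)$ on the interval.

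A crude bound $|S|\le2\sum|q_l|<0.47$ is far too weak, so the enclosure must be done by interval arithmetic on a subdivision.

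\begin{enumerate}
\item Split $[0.45,0.5]$ into $N$ subintervals of width $w$, with nodes $y_j$.
\item At each node, enclose $S(y_j)$ by outward-rounded summation of the 640 stored dyadic $q_l$ against enclosed cosines. This is the same mechanism as in the proof of Theorem~\ref{thm:reference-derivatives}, so each interval contains the exact value.
\item Between nodes, apply the mean-value theorem using the stated derivative bound
\[
|S'(y)|\le\sum_l\frac{\pi l}{20}|q_l|<3.573,
\]
together with $|\sinh y|\le\sinh 0.5<0.5211$. The total function therefore moves by at most $(3.573+0.522)\,w/2$ from the nearest node.
\item Take the minimum over nodes of the lower interval endpoint minus this allowance, and the maximum of the upper endpoint plus the allowance. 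Check that the result lies strictly inside $(0.0992947015,\,0.1322397061)$.
\end{enumerate}

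The main obstacle is calibrating the stated constants. The lower margin at $y=0.45$ is about $0.00368$ only if $S(0.45)\ge0$. Since the statement's bounds were presumably taken as outward-rounded extremes, $S$ is likely small and slightly negative near $y=0.45$. The node spacing must be fine enough that the Lipschitz allowance $\approx2w$ fits inside the residual margin; I expect $w\sim10^{-4}$ or finer, or a second-order Taylor step using $\sum(\pi l/20)^2|q_l|$, would suffice.

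If the certificate instead uses global bounds only, an alternative route is available:
\begin{itemize}
\item Bound $|\cos(\pi ly/20)-1|\le\tfrac12(\pi l y/20)^2$ for the low modes.
\item Bound the high-mode tail by $2\sum_{l>L}|q_l|$.
\end{itemize}
I would try the node-plus-Lipschitz interval method first. It uses exactly the two stated coefficient sums and mirrors the earlier certified derivative argument. The first sum enters only as a consistency check, namely $|S|\le2\cdot0.234703$, which confirms that no node value can be wildly off.
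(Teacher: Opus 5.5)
Your proposal matches the paper's proof: reduce to $y\in[0.45,0.5]$ by evenness, enclose the finite cosine expression at rational nodes with outward interval arithmetic, and add the Lipschitz allowance from $\sinh(0.5)+3.573$ to the nearest nodal enclosure. The paper uses the twenty-six nodes $0.45,0.452,\ldots,0.5$ (half-spacing $10^{-3}$), and the displayed constants are the outputs of that computation rather than targets to be fitted, so the much finer grid or second-order Taylor step you anticipate is not needed. Also, backing out the allowance $\approx 0.0041$ gives a minimal nodal value of about $0.1034$, so $S(0.45)$ is slightly positive, not negative as you guessed.
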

\begin{proof}
By reflection it suffices to take $y\in[0.45,0.5]$.  The finite Fourier expression is evaluated by exact rational/interval arithmetic at the twenty-six rational nodes
\[
0.45,0.452,\ldots,0.5.
\]
For a point between neighboring nodes, the distance to the nearest node is at most $10^{-3}$.  Differentiating the displayed formula gives
\[
\left|\partial_y[\bar\theta(T,0,y)-\bar\theta(T,0,0)]\right|
\le\sinh(0.5)+3.573.
\]
Hence the value between nodes differs from its nearest certified nodal value by at most
\[
10^{-3}[\sinh(0.5)+3.573].
\]
Adding this Lipschitz remainder outward to the nodal interval bounds gives exactly the two displayed continuum bounds.
\end{proof}

\subsection{Conditional finite-radius geometry of the exact outgoing scalar}
Assume now the same exact-solution comparison hypotheses as in Theorem~\ref{thm:label-contraction}.  The nested estimate gives
\[
\|\nabla(\theta-\bar\theta)\|_{L^\infty}<0.140491
\]
in the terminal ball used below.  Let $c(T)$ be the true material center and define
\[
\Phi(x,y)=\theta(T,c(T)+(x,y))-\theta(T,c(T)).
\]
Theorem~\ref{thm:terminal-trace}, the center displacement bound, and the reference Hessian bound imply
\[
0.028<\Phi(0,y)<0.204
\qquad(0.45\le|y|\le0.5).
\]
On the horizontal strip $0\le x\le0.01$ at those heights, the terminal reference derivative estimate and the exact/reference gradient error give
\[
-65<\partial_x\theta(T,c(T)+(x,y))<-32.
\]

\medskip
\noindent\textbf{Finite-radius outgoing level set.}
\begin{theorem}\label{thm:outgoing-contour}
Under the stated comparison hypotheses, for every $0.45\le|y|\le0.5$ there exists a unique $h(y)$ such that
\[
\theta(T,c(T)+(h(y),y))=\theta(T,c(T)),
\]
and
\[
0.00043<h(y)<0.0064.
\]
The graph is the transported level through the true material center.
\end{theorem}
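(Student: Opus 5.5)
The argument is a one-dimensional intermediate-value and monotonicity argument along horizontal segments, using the two bounds recorded just before the statement. Fix $y$ with $0.45\le|y|\le0.5$ and set $g(x)=\Phi(x,y)=\theta(T,c(T)+(x,y))-\theta(T,c(T))$ for $0\le x\le0.01$. The desired $h(y)$ is a zero of $g$. Uniqueness, and the identification of the graph with the transported level set through the true material center, come from strict monotonicity of $g$ in $x$. Since $D_t\theta=0$, that level set is the image of the initial level through the label of $c(T)$.

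\textbf{Step 1 (monotonicity).} On the strip $0\le x\le0.01$ at these heights we have $-65<\partial_x\theta<-32$. Hence $g$ is $C^1$ and strictly decreasing, so it has at most one zero there.

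\textbf{Step 2 (existence with explicit bracket).} At $x=0$ we have $0.028<g(0)<0.204$. By the mean-value theorem,
\[
g(0)-65x<g(x)<g(0)-32x .
\]
The upper inequality gives $g(x)<0$ as soon as $x\ge g(0)/32$. The lower inequality gives $g(x)>0$ for $x\le g(0)/65$. It follows that the zero satisfies
\[
\frac{0.028}{65}<\frac{g(0)}{65}\le h(y)\le\frac{g(0)}{32}<\frac{0.204}{32}.
\]
Numerically, $0.028/65\approx 0.000431>0.00043$ and $0.204/32=0.006375<0.0064$. Because $0.006375<0.01$, the sign change occurs inside the strip where Step 1 applies. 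The intermediate value theorem then gives existence, and Step 1 gives uniqueness on $[0,0.01]$.

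\textbf{Step 3 (main obstacle: uniqueness beyond the strip and the bounds on $\Phi(0,y)$).} The statement's uniqueness should be read within the strip, or at least for $x\ge0$ near the center. Extending it to all $x$ would need gradient sign information on a larger region, which I would not attempt. The real work lies upstream, in justifying the inputs $0.028<\Phi(0,y)<0.204$ and the $\partial_x\theta$ band. For the trace, combine three ingredients:
\begin{itemize}[label={}]
\item Theorem~\ref{thm:terminal-trace}, giving $(0.0993,0.1322)$ for the reference;
\item the exact/reference gradient error $0.140491$ integrated over a path of length at most about $0.5$;
\item the center shift $d_*$ multiplied by the reference gradient bound, plus a reference Hessian correction.
\end{itemize}
These widen the reference interval to $(0.028,0.204)$. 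I would check this arithmetic carefully, since the lower margin is tight.
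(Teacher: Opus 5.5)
Your proof is correct and is essentially the paper's argument: strict monotonicity from $-65<\partial_x\theta<-32$ on the strip, the integrated bound $\Phi(0,y)-65x<\Phi(x,y)<\Phi(0,y)-32x$ giving $0.028/65<h(y)<0.204/32<0.01$, the intermediate value theorem, and transport of the center's scalar value, with uniqueness understood on the strip as in the paper. The paper also takes $0.028<\Phi(0,y)<0.204$ as an input. If you do derive it, bound the center-shift effect as a difference of differences, $d_*\,|y|\sup|D^2\bar\theta|<3.1\times10^{-4}$; charging $d_*\sup|\nabla\bar\theta|$ separately at both endpoints instead costs about $1.5\times10^{-3}$ and pushes the lower bound below $0.028$.
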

\begin{proof}
Fix $y$ in the stated range.  At $x=0$ the value $\Phi(0,y)$ is strictly positive.  Since $\partial_x\theta<-32$ throughout the strip, $\Phi(x,y)$ is strictly decreasing in $x$.  Integrating the derivative bound gives
\[
\Phi(0,y)-65x<\Phi(x,y)<\Phi(0,y)-32x.
\]
At $x=0.00043$, the lower information on $\Phi(0,y)$ and the upper magnitude $65$ keep the sign positive with the certified margins; at $x=0.0064$, the upper information on $\Phi(0,y)$ and the lower magnitude $32$ make the sign negative.  The intermediate value theorem gives a root, and strict monotonicity gives uniqueness.  The center is a material particle, so its scalar value is transported from the initial level.  Reflection yields the same conclusion for positive and negative heights.
\end{proof}

The proof uses only values and first derivatives.  This is intentional.  The present comparison topology does not control pointwise curvature.  Indeed, for a smooth cutoff $\chi$ and large $N$,
\[
h_N(y)=\varepsilon N^{-3/2}\chi(y)(\cos Ny-1)
\]
has arbitrarily small $C^{1,1/2}$ size while
\[
h_N''(0)=-\varepsilon\sqrt N.
\]
Thus a pointwise curvature sign cannot be inferred from the present error norm without an additional derivative theorem.

\subsection{A circulation-separation lower bound for vorticity}
The fixed-label formulation also gives a useful exact inequality independent of the particular reference.  Let two material particles have radii bounded by $R(t)$ and Euclidean separation $d(t)$.  Since $\Gamma$ is transported,
\[
\Delta_\Gamma=|\Gamma_1-\Gamma_2|
\]
is constant.  For an axisymmetric field,
\[
\partial_r\Gamma=r\omega^z,
\qquad
\partial_z\Gamma=-r\omega^r.
\]
Hence
\[
|\nabla_{r,z}\Gamma|
=r\sqrt{(\omega^r)^2+(\omega^z)^2}
\le r|\omega|.
\]
The fundamental theorem of calculus along a meridional segment joining the two particles therefore gives
\[
\Delta_\Gamma
\le R(t)d(t)\|\omega(t)\|_\infty.
\]
Rearranging yields
\[
\boxed{
\|\omega(t)\|_\infty
\ge\frac{\Delta_\Gamma}{R(t)d(t)}.}
\]
If only the square contrast $\Delta_\Theta=|\Gamma_1^2-\Gamma_2^2|$ is used and $|\Gamma|\le G$, then
\[
\Delta_\Gamma\ge\frac{\Delta_\Theta}{2G},
\]
so
\[
\boxed{
\|\omega(t)\|_\infty
\ge\frac{\Delta_\Theta}{2G R(t)d(t)}.}
\]
These inequalities identify the geometric quantity that a repeated contraction theorem would have to drive.  They do not themselves provide such repetition.

\section{What remains for a physical successor block}
The finite reference results above substantially narrow the validation problem but do not constitute a complete successor-admission theorem.  The unresolved finite-reference part is explicit.  For seven assembled Bernstein coefficient pairs $(r_k,g_k)$, $k=0,\ldots,6$, define
\[
J_k(x,y)=\int_{-\infty}^y r_k(x,s)\,ds.
\]
The required global continuum inequalities are
\[
\boxed{\|J_k\|_2\le1.3,\qquad\|g_k\|_2\le195
\quad(k=0,\ldots,6).}
\]
Fine-grid diagnostics lie inside these caps, but the theorem requires outward enclosure of the assembled functions, not a triangle estimate of their individually larger pieces.  The remaining local residual and reference-velocity rows must be enclosed in the same fixed-reference pass.

Even after that finite certificate is complete, a physical successor theorem needs more than favorable signs.  A sufficient outgoing class must retain at least the material center and deformation, the finite transported contour, the same-label compression, the relevant signed collar/halo response, exterior Hodge and tail control, source provenance, and the cylindrical/recentering error.  The theorem must show that an open neighborhood of such states evolves, under the same unforced Euler solution, to another usable state with uniform margins.  No such same-solution admission theorem is proved here.

The final two steps are likewise separate: one must compare the planar block with an exact axisymmetric cylindrical whole-space solution, and then prove repeated regeneration with summable physical durations from a single smooth finite-energy datum.  The results of this paper establish neither a finite-time Euler singularity nor a regularity theorem.  They provide an exact and rigorously testable finite-event framework in which those remaining implications are stated without hidden recurrence assumptions.


\begin{thebibliography}{99}

\bibitem{BKM1984}
J. T. Beale, T. Kato, and A. Majda,
\emph{Remarks on the breakdown of smooth solutions for the 3-D Euler equations},
Comm. Math. Phys. \textbf{94} (1984), 61--66.

\bibitem{LuoHouPNAS2014}
G. Luo and T. Y. Hou,
\emph{Potentially singular solutions of the 3D axisymmetric Euler equations},
Proc. Natl. Acad. Sci. USA \textbf{111} (2014), 12968--12973.

\bibitem{LuoHouMMS2014}
G. Luo and T. Y. Hou,
\emph{Toward the finite-time blowup of the 3D axisymmetric Euler equations: a numerical investigation},
Multiscale Model. Simul. \textbf{12} (2014), 1722--1776.

\bibitem{ChoiHouKiselevLuoSverakYao2017}
K. Choi, T. Y. Hou, A. Kiselev, G. Luo, V. \v Sver\'ak, and Y. Yao,
\emph{On the finite-time blowup of a one-dimensional model for the three-dimensional axisymmetric Euler equations},
Comm. Pure Appl. Math. \textbf{70} (2017), 2218--2243.

\bibitem{Elgindi2021}
T. M. Elgindi,
\emph{Finite-time singularity formation for $C^{1,\alpha}$ solutions to the incompressible Euler equations on $\mathbb R^3$},
Ann. of Math. \textbf{194} (2021), 647--727.

\bibitem{ChenHou2025}
J. Chen and T. Y. Hou,
\emph{Singularity formation in 3D Euler equations with smooth initial data and boundary},
Proc. Natl. Acad. Sci. USA \textbf{122} (2025), e2500940122.

\end{thebibliography}
\end{document}